\newtheorem{theorem}{Theorem}[section]
\newtheorem{corollary}[theorem]{Corollary}
\newtheorem{lemma}[theorem]{Lemma}
\newtheorem{prop}[theorem]{Proposition}
\theoremstyle{definition}
\newtheorem{defn}[theorem]{Definition}
\theoremstyle{remark}
\newtheorem{remark}[theorem]{\bf{Remark}}
\theoremstyle{remark}
\newtheorem{example}[theorem]{\bf{Example}}
\numberwithin{equation}{section}
\newcommand{\beas} {\begin{eqnarray*}}
\newcommand{\eeas} {\end{eqnarray*}}
\newcommand{\bes} {\begin{equation*}}
\newcommand{\ees} {\end{equation*}}
\newcommand{\be} {\begin{equation}}
\newcommand{\ee} {\end{equation}}
\newcommand{\bea} {\begin{eqnarray}}
\newcommand{\eea} {\end{eqnarray}}
\newcommand{\R}{\mathbb R}
\newcommand{\C}{\mathbb C}
\begin{document}

\title[$\sigma$-Point and nontangential convergence] {A note on $\sigma$-point and nontangential convergence}

\author[J. Sarkar]{Jayanta Sarkar}

\address{Stat-Math Unit, Indian Statistical Institute, 203, B.T. Road, Kolkata-700108, India}
\email{jayantasarkarmath@gmail.com}

\keywords{$\sigma$-Point, Nontangential convergence, Convolution integral, Strong derivative}
\subjclass[2010]{Primary 31B25, 44A35; Secondary 31A20, 28A15}


\begin{abstract}
In this article, we generalize a theorem of Victor L. Shapiro concerning nontangential convergence of the Poisson integral of a $L^p$-function. We introduce the notion of $\sigma$-points of a locally finite measure and consider a wide class of convolution kernels. We show that convolution integrals of a measure have nontangential limits at $\sigma$-points of the measure. We also investigate the relationship between $\sigma$-point and the notion of the strong derivative introduced by Ramey and Ullrich. In one dimension, these two notions are the same.
\end{abstract}

\subjclass[2010]{Primary 31B25, 44A35; Secondary 31A20, 28A15}

\maketitle
\section{Introduction}
In this article, by a measure $\mu$ we will always mean a complex Borel measure or a signed Borel measure such that the total variation $|\mu|$ is locally finite, that is, $|\mu|(K)$ is finite for all compact sets $K$. If $\mu(E)$ is nonnegative for all Borel measurable sets $E$ then $\mu$ will be called a positive measure. The notion of Lebesgue point of a measure was defined by Saeki \cite{Sa} which we recall. A point $x_0\in\R^n$ is called a Lebesgue point of a measure $\mu$ on $\R^n$ if there exists $L\in\C$ such that
\begin{equation}\label{leblim}
\lim_{r\to 0}\frac{|\mu-Lm|(B(x_0,r))}{m(B(0,r))}=0,
\end{equation}
where $B(x_0,r)$ denotes the open ball of radius $r$ with center at $x_0$ with respect to the Euclidean metric and $m$ denotes the Lebesgue measure of $\R^n$. In this case, the symmetric derivative of $\mu$ at $x_0$,
\begin{equation}\label{symder}
D_{sym}\mu(x_0):=\lim_{r\to 0}\frac{\mu(B(x_0,r))}{m(B(0,r))}
\end{equation}
exists and is equal to $L$. The set of all of Lebesgue points of a measure $\mu$ is called the Lebesgue set of $\mu$. It is not very hard to see that the Lebesgue set of a measure $\mu$ includes almost all (with respect to the Lebesgue measure) points of $\R^n$ (see Proposition \ref{almostevery}). Given a measure $\mu$ on $\R^n$, its Poisson integral $P\mu$ on the upper half space $\R^{n+1}_+=\{(x,t):x\in\R^n,t>0\}$ is defined by the convolution
\begin{equation*}
P\mu (x,t)=\int_{\R^n}P(x-\xi,t)\:d\mu (\xi),
\end{equation*}
whenever the integral exists. Here, the kernel $P(x,t)$ is the usual Poisson kernel of $\R_+^{n+1}$ given by the formula
\begin{equation*}
P(x,t)=c_n\frac{t}{(t^2+\|x\|^2)^{\frac{n+1}{2}}},\:\:\:\:\: c_n=\pi^{-(n+1)/2}\Gamma\left(\frac{n+1}{2}\right).
\end{equation*}
It is known that if the integral above exists for some $(x_0,t_0)\in \R_+^{n+1}$ then it exists for all points in $\R_+^{n+1}$ and defines a harmonic function in $\R_+^{n+1}$. In this article, we will be concerned with the nontangential convergence of $P\mu$ or of more general convolution integrals. For $x_0\in\R^n$ and $\alpha>0$, we define the conical region $S(x_0,\alpha)$ with vertex at $x_0$ and aperture $\alpha$ by
\begin{equation*}
S(x_0,\alpha)=\{(x,t)\in\R^{n+1}_+:\|x-x_0\|<\alpha t\}.
\end{equation*}
\begin{defn}\label{nontang}
A function $u$ defined on $\R^{n+1}_+$ or on a strip $\R^n\times(0,t_0)$ for some $t_0>0$, is said to have nontangential limit $L\in\C$ at $x_0\in\R^n$ if, for every $\alpha>0$,
\begin{equation*}
\lim_{\substack{(x,t)\to(x_0,0)\\(x,t)\in S(x_0,\alpha)}}u(x,t)=L.
\end{equation*}
\end{defn}
It is a classical result that if $f\in L^p(\R^n)$, $1\leq p\leq\infty$ then the Poisson integral $Pf$ of $f$ has nontangential limit $f(x_0)$ at each Lebesgue point $x_0$ of $f$ (see \cite[Theorem 3.16]{SW}).
In \cite{Sa}, Saeki generalized this result for more general class of kernels as well as for measures instead of $L^p$-functions (see Theorem \ref{saeki}). A natural question arises that what happens to the nontangential convergence at non-Lebesgue points. To answer this question, Shapiro \cite{Sh} introduced the notion of $\sigma$-point of a locally integrable function.
\begin{defn}\label{sigmap}
A point $x_0\in\R^n$ is called a $\sigma$-point of a locally integrable function $f$ on $\R^n$ provided the following holds: for each $\epsilon>0$, there exists $\delta>0$ such that
\begin{equation*}
\left|\int_{B(x,r)}\left(f(\xi)-f(x_0)\right)\:dm(\xi)\right|<\epsilon(\|x-x_0\|+r)^n,
\end{equation*}
whenever $\|x-x_0\|<\delta$ and $r<\delta$.
\end{defn}
The set of all $\sigma$-points of $f$ is called the $\sigma$-set of $f$.
As observed by Shapiro, the Lebesgue set of a locally integrable function is contained in the $\sigma$-set of the function \cite[P.3182]{Sh}. This containment is strict for some functions. In fact, Shapiro constructed a function $f\in L^p(\R^2)$, $1\leq p\leq\infty$ such that $0$ is a $\sigma$-point of $f$ but not a Lebesgue point of $f$ (see \cite[Section 3]{Sh}). Our main aim in this article is to generalize the following result of Shapiro \cite[Theorem 1]{Sh} for measures.
\begin{theorem}\label{shapiro}
Let $f\in L^p(\R^n)$, $1\leq p\leq\infty$. If $x_0\in\R^n$ is a $\sigma$-point of $f$, then $Pf$ has nontangenial limit $f(x_0)$ at $x_0$.
\end{theorem}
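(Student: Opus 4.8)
The plan is to reduce the statement to a bound on ball-averages of $f$, which is exactly the quantity that the $\sigma$-point hypothesis controls. Since $\int_{\R^n}P(x-\xi,t)\,d\xi=1$ and the Poisson integral exists for $f\in L^p$, I would first write, for $(x,t)\in\R^{n+1}_+$,
$$ Pf(x,t)-f(x_0)=\int_{\R^n}P(x-\xi,t)\bigl(f(\xi)-f(x_0)\bigr)\,d\xi, $$
and note that the right-hand integral is absolutely convergent, because $\int_{\R^n} P(x-\xi,t)|f(\xi)|\,d\xi\le\|P(x-\cdot,t)\|_{p'}\|f\|_p<\infty$ while the constant term contributes only $|f(x_0)|$; hence $\int_{\R^n}P(x-\xi,t)|f(\xi)-f(x_0)|\,d\xi<\infty$. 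Fixing an aperture $\alpha>0$, the goal is to show this difference tends to $0$ as $(x,t)\to(x_0,0)$ within $S(x_0,\alpha)$.

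The key device is that, for fixed $t$, the Poisson kernel is a radial, strictly decreasing function of $\|x-\xi\|$. Writing $\psi_t(s)=c_n t\,(t^2+s^2)^{-(n+1)/2}$ for its radial profile, one has $\psi_t(s)=\int_s^\infty\bigl(-\psi_t'(\sigma)\bigr)\,d\sigma$ with $-\psi_t'(\sigma)=c_n(n+1)\,t\sigma\,(t^2+\sigma^2)^{-(n+3)/2}\ge0$. Substituting $P(x-\xi,t)=\int_0^\infty(-\psi_t'(\sigma))\,\mathbf 1_{B(x,\sigma)}(\xi)\,d\sigma$ into the convolution and interchanging the order of integration (legitimate by the absolute convergence just noted) converts the integral into a weighted average of ball-integrals,
$$ Pf(x,t)-f(x_0)=\int_0^\infty(-\psi_t'(\sigma))\,A(x,\sigma)\,d\sigma,\qquad A(x,\sigma):=\int_{B(x,\sigma)}\bigl(f(\xi)-f(x_0)\bigr)\,d\xi. $$
This is the step that brings Definition \ref{sigmap} into play, since the $\sigma$-point hypothesis is precisely a bound on $|A(x,\sigma)|$.

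Given $\epsilon>0$, I would choose $\delta>0$ as in Definition \ref{sigmap} and split the $\sigma$-integral at $\delta$. Once $(x,t)\in S(x_0,\alpha)$ is close enough to $(x_0,0)$ we have $\|x-x_0\|<\delta$, so on $0<\sigma<\delta$ the hypothesis gives $|A(x,\sigma)|<\epsilon(\|x-x_0\|+\sigma)^n\le\epsilon(\alpha t+\sigma)^n$, whence
$$ \Bigl|\int_0^\delta(-\psi_t'(\sigma))A(x,\sigma)\,d\sigma\Bigr|\le\epsilon\int_0^\infty(-\psi_t'(\sigma))(\alpha t+\sigma)^n\,d\sigma=\epsilon\,c_n(n+1)\int_0^\infty\frac{u(\alpha+u)^n}{(1+u^2)^{(n+3)/2}}\,du, $$
where the substitution $\sigma=tu$ shows the last integral is a finite constant $C(n,\alpha)$ independent of $t$ (its integrand decays like $u^{-2}$). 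For the tail $\sigma\ge\delta$, where the hypothesis is unavailable, I would bound $A$ crudely by $|A(x,\sigma)|\lesssim_f \sigma^{n/p'}+\sigma^n$ (Hölder together with $|f(x_0)|\,|B(x,\sigma)|$) and use $-\psi_t'(\sigma)\le c_n(n+1)\,t\,\sigma^{-(n+2)}$ to obtain a contribution $\lesssim_{f}\, t/\delta^{\,\epsilon'}\to0$ as $t\to0$. Combining the two pieces yields $\limsup|Pf(x,t)-f(x_0)|\le\epsilon\,C(n,\alpha)$ along $S(x_0,\alpha)$, and letting $\epsilon\to0$ finishes the proof.

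The main obstacle is not any single estimate but keeping the scheme uniform over the entire cone: one must verify that the scaling substitution really produces a finite constant $C(n,\alpha)$ that is independent of $t$, and that the tail term vanishes uniformly in $x$ as $t\to0$. The conceptual heart is the layer-cake representation of the convolution, which is what turns the cancellation encoded in the $\sigma$-point condition into a tractable one-dimensional integral against the positive measure $-\psi_t'(\sigma)\,d\sigma$; justifying the Fubini interchange through the absolute convergence of $\int_{\R^n} P(x-\xi,t)|f(\xi)-f(x_0)|\,d\xi$ is the technical point that underpins everything else.
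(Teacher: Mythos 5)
Your proof is correct, and it proves the theorem by a route that differs in execution from the paper's, which obtains Theorem \ref{shapiro} as a special case of Theorem \ref{mythmsigma}. The conceptual core is the same in both: represent the kernel as a superposition of indicators of balls centred at $x$, so that the $\sigma$-point hypothesis applies directly to the resulting ball-averages. But you parametrize the superposition by \emph{radius}, writing $P(x-\xi,t)=\int_0^\infty(-\psi_t'(\sigma))\mathbf 1_{B(x,\sigma)}(\xi)\,d\sigma$ and exploiting the smoothness and explicit decay of the Poisson profile, whereas the paper parametrizes by \emph{level}, writing $\phi\left(\frac{x-\xi}{t}\right)=\int_0^{\phi((x-\xi)/t)}ds$ and invoking Lemma \ref{ball} to identify the superlevel sets as balls $B(x,t\theta(s))$ — the two representations are related by the change of variables $s=\psi_t(\sigma)$, and the paper's version is forced on it because a general radial decreasing kernel need not be differentiable (hence the lower semicontinuity hypothesis and the ball lemma, which your smooth kernel makes unnecessary). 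The closing arguments then diverge: the paper establishes $M\mu(0)<\infty$, disposes of the far field once and for all via Lemma \ref{reducenontan} (which needs the comparison condition (\ref{comp})), and concludes by dominated convergence using the integrability of $\theta(s)^n$; you instead run a quantitative $\epsilon$--$\delta$ split of the radial integral at $\delta$, getting a scale-invariant constant $C(n,\alpha)$ for the near part and killing the tail with H\"older's inequality against $\|f\|_p$ together with $-\psi_t'(\sigma)\le c_n(n+1)t\sigma^{-(n+2)}$. Your tail estimate is exactly where the hypothesis $f\in L^p$ is consumed, and it is the step that does not survive the generalization to measures — for a locally finite $\mu$ one has no global H\"older bound, which is why the paper needs the comparison condition and the reduction lemma there. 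What your approach buys in the $L^p$ setting is a self-contained, fully quantitative proof with an explicit aperture-dependent constant and uniform-in-$x$ tail decay; what the paper's buys is generality (arbitrary radial decreasing lower semicontinuous kernels, measures with merely finite $|\mu|\ast\phi_{t_0}(x_1)$). All your technical points check out: the Fubini interchange is justified by the absolute convergence you note, the substitution $\sigma=tu$ does yield a $t$-independent finite constant since the integrand decays like $u^{-2}$, and the cone condition $\|x-x_0\|<\alpha t$ is used correctly to convert the $\sigma$-point bound into $\epsilon(\alpha t+\sigma)^n$.
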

In the next section, we will define the notion of $\sigma$-point of a measure on $\R^n$ and prove a generalization of Theorem \ref{shapiro}. We will consider a wide class of kernels which includes the Poisson kernel and Gauss-Weierstrass kernel and discuss the nontangential behavior of convolutions of these kernels with measures.

It is worth mentioning that Ramey-Ullrich \cite{UR} had also discussed the nontangential behavior of $P\mu$ by considering the strong derivative of a positive measure $\mu$. We will discuss the result of Ramey-Ullrich and the relationship between the strong derivative and $\sigma$-point in the last section.

\section{nontangential convergence of convolution integrals}
Let us start by defining the notion of $\sigma$-point of a measure.
\begin{defn}
Let $\mu$ be a measure on $\R^n$. A point $x_0\in\R^n$ is called a $\sigma$-point of $\mu$ if there exists $L\in\C$ satisfying the following: for each $\epsilon>0$, there exists $\delta>0$ such that
\begin{equation*}
|(\mu-Lm)(B(x,r))|<\epsilon(\|x-x_0\|+r)^n,
\end{equation*}
whenever $\|x-x_0\|<\delta$ and $r<\delta$. In this case, we will denote $D_{\sigma}\mu(x_0)=L$.
\end{defn}
\begin{remark}\label{lebmeanssigma}
Every Lebesgue point of the measure $\mu$ is a $\sigma$-point of $\mu$. Moreover, $D_{\sigma}\mu(x_0)=D_{sym}\mu(x_0)$, whenever $x_0$ is a Lebesgue point of $\mu$. To see this, we take a Lebesgue point $x_0\in\R^n$ of $\mu$. We fix $\epsilon>0$. By the definition of Lebesgue point, there exists $\delta>0$ such that
\begin{equation*}
|\mu-Lm|(B(x_0,r))<\frac{\epsilon}{m(B(0,1))}m(B(x_0,r))=\epsilon r^n,
\end{equation*}
whenever $0<r<\delta$, where $L=D_{sym}\mu(x_0)$. This implies that
\begin{equation*}
|(\mu-Lm)(B(x,r))|\leq |\mu-Lm|(B(x,r))\leq|\mu-Lm|(B(x_0,r+\|x-x_0\|))<\epsilon(\|x-x_0\|+r)^n,
\end{equation*}
whenever $0<r+\|x-x_0\|<\delta$. This shows that $x_0$ is a $\sigma$-point of $\mu$ and
\begin{equation*}
D_{\sigma}\mu(x_0)=D_{sym}\mu(x_0).
\end{equation*}
We have already mentioned in the introduction that the converse is not true.
\end{remark}
The following proposition shows that almost every points of $\R^n$ is a Lebesgue point and hence $\sigma$-point of a measure.
\begin{prop}\label{almostevery}
The Lebesgue set of a measure $\mu$ on $\R^n$ includes almost all (with respect to the Lebesgue measure) points of $\R^n$.
\end{prop}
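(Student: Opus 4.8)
The plan is to combine the Lebesgue--Radon--Nikodym decomposition of $\mu$ with the classical differentiation theory of measures. First I would write $\mu=\mu_{ac}+\mu_s$, the decomposition of $\mu$ relative to Lebesgue measure $m$, where $\mu_{ac}\ll m$ and $\mu_s\perp m$. Since $|\mu|$ is locally finite, the Radon--Nikodym derivative $f=d\mu_{ac}/dm$ lies in $L^1_{\mathrm{loc}}(\R^n)$ and $d\mu_{ac}=f\,dm$. The strategy is then to control the two pieces separately and recombine.

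For the absolutely continuous part, the classical Lebesgue differentiation theorem produces a set $E_1\subseteq\R^n$ whose complement is $m$-null, such that every $x_0\in E_1$ is a Lebesgue point of $f$ in the usual sense:
\begin{equation*}
\lim_{r\to 0}\frac{1}{m(B(x_0,r))}\int_{B(x_0,r)}|f(\xi)-f(x_0)|\,dm(\xi)=0.
\end{equation*}
For the singular part, I would invoke the fact that a measure mutually singular with $m$ has vanishing symmetric derivative almost everywhere: there is a set $E_2$ of full measure such that for every $x_0\in E_2$,
\begin{equation*}
\lim_{r\to 0}\frac{|\mu_s|(B(x_0,r))}{m(B(x_0,r))}=0.
\end{equation*}

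Next I would set $L=f(x_0)$ and, using $\mu-Lm=(f-L)\,m+\mu_s$ together with the subadditivity of total variation, estimate for $x_0\in E_1\cap E_2$,
\begin{equation*}
|\mu-Lm|(B(x_0,r))\leq\int_{B(x_0,r)}|f(\xi)-L|\,dm(\xi)+|\mu_s|(B(x_0,r)).
\end{equation*}
Dividing by $m(B(0,r))=m(B(x_0,r))$ (by translation invariance) and letting $r\to 0$, both terms vanish by the choice of $E_1$ and $E_2$. Hence $x_0$ is a Lebesgue point of $\mu$ with $D_{sym}\mu(x_0)=f(x_0)$. Since $\R^n\setminus(E_1\cap E_2)$ is $m$-null, this proves the proposition.

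The main obstacle is the differentiation theory behind $E_1$ and $E_2$; the genuinely nontrivial ingredient is the $m$-a.e.\ vanishing of the derivative of the singular part. I would establish this through a covering argument: for fixed $\lambda>0$, the exceptional set where $\limsup_{r\to 0}|\mu_s|(B(x_0,r))/m(B(x_0,r))>\lambda$ can be covered, via the Vitali (or Besicovitch) covering lemma, by balls on which $|\mu_s|$ is large relative to their Lebesgue measure. Because $\mu_s$ is concentrated on an $m$-null set, shrinking the $|\mu_s|$-mass on a neighborhood of that null set forces the outer measure of the exceptional set to zero, and then letting $\lambda\to 0$ along a sequence completes the argument. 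The Lebesgue point statement for $f$ rests on the same circle of ideas applied to the Hardy--Littlewood maximal function of $f$.
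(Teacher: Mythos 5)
Your proposal is correct and follows essentially the same route as the paper: decompose $\mu=f\,dm+d\mu_s$ via Lebesgue--Radon--Nikodym, apply the Lebesgue differentiation theorem to $f$ and the a.e.\ vanishing of $D_{sym}|\mu_s|$ (which the paper simply cites, as \cite[Theorem 7.13]{Rureal}, where you instead sketch the standard covering argument), then combine the two estimates by subadditivity of total variation exactly as in the paper's displayed inequality. No gaps; the intersection of the two full-measure sets gives the conclusion with $D_{sym}\mu(x_0)=f(x_0)$, just as in the paper.
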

\begin{proof}
Let $d\mu=f\:dm+d\mu_s$ be the Radon-Nikodym decomposition of $\mu$ with respect to $m$, where $f\in L_{loc}^1(\R^n)$ and $\mu_s\perp m$ (see \cite[P.121-123]{Rureal}). If $L_f$ denotes the Lebesgue set of $f$, then we know that \cite[P.12]{SW} $$m(\R^n\setminus L_f)=0.$$ We observe that $|\mu_s|\perp m$ and hence by \cite[Theorem 7.13]{Rureal},
\begin{equation*}
m(\R^n\setminus  A)=0,\:\:\:\text{where}\:\:\:A=\{x\in\R^n\mid D_{sym}|\mu_s|(x)=0\}.
\end{equation*}
Consequently, $\R^n\setminus (L_f\cap A)$ is of Lebesgue measure zero. Now, for $x_0\in L_f\cap A$ and any $r>0$,
\begin{eqnarray*}
\frac{|\mu-f(x_0)m|(B(x_0,r))}{m(B(0,r))}\leq\frac{1}{m(B(0,r))}\int_{B(x_0,r)}|f(x)-f(x_0)|\:dm(x)+\frac{|\mu_s|(B(x_0,r))}{m(B(0,r))}.
\end{eqnarray*}
Since $x_0\in L_f\cap A$, each summand on the right hand side of the inequality above goes to zero as $r\to 0$. This proves our assertion.
\end{proof}
Let $\phi:\R^n\to [0,\infty)$ be radial and radially decreasing measurable function, that is,
\begin{eqnarray*}
&&\phi (x)=\phi(y),\:\:\:\:\text{if}\:\:\|x\|=\|y\|\:\:\:\\
&&\phi (x)\geq \phi(y),\:\:\:\:\text{if}\:\:\|x\|<\|y\|,
\end{eqnarray*}
with
\begin{equation*}
\int_{\R^n}\phi (x)\:dm(x)=1.
\end{equation*}
For $t>0$, we consider the usual approximate identity
\begin{equation*}
\phi_t(x)=t^{-n}\phi\left(\frac{x}{t}\right),\:\:\:\: x\in\R^n.
\end{equation*}
Given a measure $\mu$, we define the convolution integral $\phi[\mu]$ by
\begin{equation}\label{convint}
\phi[\mu](x,t)=\mu\ast\phi_{t}(x)=\int_{\R^n}\phi_{t}(x-\xi)\:d\mu(\xi),
\end{equation}
whenever the integral exists for $(x,t)\in \R^{n+1}_+$.
\begin{remark}\label{existence}
It was proved in \cite[Remark 1.4]{Sa} that if $\mu$ is a measure on $\R^n$ and $\phi$ is a nonnegative, radially decreasing function on $\R^n$ then the finiteness of $|\mu|\ast \phi_{t_0}(x_0)$ implies the finiteness of $|\mu|\ast\phi_t(x)$ for all $(x,t)\in\R^n\times(0,t_0)$. Note also that if $|\mu|(\R^n)$ is finite then $\mu\ast \phi_{t}(x)$ is well defined for all $(x,t)\in\R^{n+1}_+$.
\end{remark}
In addition to the above, in some of our results we will also assume that $\phi$ is strictly positive and satisfies the following comparison condition \cite[P.134]{Sa}.
\begin{equation}\label{comp}
\sup\:\left\{\frac{\phi_t(x)}{\phi (x)}\mid t\in (0,1), \|x\|>1\right\}<\infty.
\end{equation}
It is easy to see that $P(x,1)$ and the Gaussian
\begin{equation}\label{gaussian}
w(x)=(4\pi)^{-\frac{n}{2}}e^{-\frac{\|x\|^2}{4}},\:x\in\R^n
\end{equation}
satisfy the comparison condition (\ref{comp}) \cite[Example 2.2]{Sar1}.
The following generalization of the nontangential convergence of the Poisson integral $Pf$ was proved by Saeki \cite[Theorem 1.5]{Sa}.
\begin{theorem}\label{saeki}
Suppose $\phi:\R^n\to (0,\infty)$ satisfies the following conditions:
\begin{enumerate}
\item $\phi$ is radial, radially decreasing measurable function with $\|\phi\|_{L^1(\R^n)}=1$.
\item $\phi$ satisfies the condition (\ref{comp}).
\end{enumerate}
Suppose $\mu$ is a measure on $\R^n$ such that $|\mu|\ast \phi_{t_0}(x_1)$ is finite for some $t_0>0$ and $x_1\in\R^n$. Then the convolution integral $\phi[\mu]$ has nontangential limit $D_{sym}\mu(x_0)$ at each Lebesgue point $x_0$ of $\mu$.
\end{theorem}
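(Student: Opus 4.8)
The plan is to reduce to density zero, rewrite the convolution by the radial layer-cake decomposition of $\phi$, and split the resulting integral into a local part controlled by the Lebesgue condition and a tail controlled by the comparison condition (\ref{comp}). First I would fix a Lebesgue point $x_0$, put $L=D_{sym}\mu(x_0)$, and replace $\mu$ by $\nu=\mu-Lm$. Since $\|\phi\|_{L^1(\R^n)}=1$ gives $m\ast\phi_t\equiv1$, we have $\phi[\mu](x,t)=\nu\ast\phi_t(x)+L$, so it suffices to show $\nu\ast\phi_t(x)\to0$ nontangentially; by the definition of Lebesgue point, $x_0$ is a Lebesgue point of $\nu$ with $D_{sym}\nu(x_0)=0$, that is $|\nu|(B(x_0,r))=o(r^n)$, and after a translation I take $x_0=0$.

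Because $\phi$ is radial and radially decreasing with values in $(0,\infty)$, its radial profile $\psi$, defined by $\phi(y)=\psi(\|y\|)$, is nonincreasing and bounded by $\phi(0)<\infty$. Setting $d\lambda=-d\psi$ produces a finite positive Borel measure on $(0,\infty)$ with $\lambda((s,\infty))=\psi(s)$ and, after integrating, $\omega_n\int_0^\infty\rho^n\,d\lambda(\rho)=1$, where $\omega_n=m(B(0,1))$. The radial layer-cake identity $\phi(y)=\int_0^\infty\mathbf 1_{B(0,\rho)}(y)\,d\lambda(\rho)$ then gives
\begin{equation*}
\nu\ast\phi_t(x)=t^{-n}\int_0^\infty\nu(B(x,t\rho))\,d\lambda(\rho),
\end{equation*}
the interchange being justified by Fubini since $|\nu|\ast\phi_t(x)<\infty$ by Remark \ref{existence} (as $|\nu|\le|\mu|+|L|m$). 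I would record two consequences: first $u^n\psi(u)\le\int_u^\infty\rho^n\,d\lambda(\rho)\to0$, so $t^{-n}\psi(\eta/t)\to0$ as $t\to0$ for each fixed $\eta>0$; second, rescaling (\ref{comp}) yields a constant $C_0$ with $\phi_t(y)\le C_0\phi_{t_1}(y)$ whenever $t<t_1$ and $\|y\|>t_1$.

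Fixing the aperture $\alpha$ and working inside $S(0,\alpha)$, where $\|x\|<\alpha t$ and hence $B(x,t\rho)\subseteq B(0,(\alpha+\rho)t)$, I would cut the integral at distance $\eta$ from $x$. Given $\epsilon>0$, the Lebesgue condition supplies $\eta_0$ with $|\nu|(B(0,r))<\epsilon r^n$ for $r<\eta_0$; taking $\eta<\eta_0$ and $t$ small enough that $\alpha t+\eta<\eta_0$, the layer-cake bound shows the local part $\int_{\|x-\xi\|<\eta}\phi_t(x-\xi)\,d\nu(\xi)$ is at most
\begin{equation*}
\epsilon\int_0^\infty(\alpha+\rho)^n\,d\lambda(\rho)+t^{-n}\psi(\eta/t)\,|\nu|(B(0,2\eta)),
\end{equation*}
and since $\int_0^\infty(\alpha+\rho)^n\,d\lambda\le 2^{n-1}(\alpha^n\phi(0)+\omega_n^{-1})<\infty$ while $t^{-n}\psi(\eta/t)\to0$, this local part is $O(\epsilon)$.

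The main obstacle is the far part $T_\eta(x,t)=\int_{\|x-\xi\|\ge\eta}\phi_t(x-\xi)\,d|\nu|(\xi)$, where both hypotheses of the theorem are indispensable. I would show $T_\eta\to0$ nontangentially by splitting $\{\|x-\xi\|\ge\eta\}$ along $\{\|\xi\|\le R\}$ and $\{\|\xi\|>R\}$. On the moderate region $\{\eta\le\|x-\xi\|,\ \|\xi\|\le R\}$ monotonicity of $\psi$ gives $\phi_t(x-\xi)\le t^{-n}\psi(\eta/t)$, so its contribution is at most $t^{-n}\psi(\eta/t)\,|\nu|(B(0,R))\to0$ as $t\to0$. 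On $\{\|\xi\|>R\}$, once $R$ is large and $t<t_1$ one has $\|x-\xi\|>t_1$, so the comparison estimate gives $\phi_t(x-\xi)\le C_0\phi_{t_1}(x-\xi)$; the finiteness of $|\mu|\ast\phi_{t_1}$, hence of $|\nu|\ast\phi_{t_1}$ (Remark \ref{existence}), then lets me make $\int_{\|\xi\|>R}\phi_{t_1}(x-\xi)\,d|\nu|(\xi)$ uniformly small over $x$ in a fixed neighborhood of $0$ by choosing $R$ large, via dominated convergence and the radial decay of $\phi_{t_1}$. Choosing $\eta$ below the Lebesgue threshold, then $R$ large, then $t$ small makes both parts $O(\epsilon)$, which gives $\nu\ast\phi_t(x)\to0$ and therefore $\phi[\mu](x,t)\to L=D_{sym}\mu(x_0)$ nontangentially.
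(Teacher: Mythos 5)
Your proposal is correct, but note first that the paper itself gives no proof of Theorem \ref{saeki}: it is quoted from Saeki \cite[Theorem 1.5]{Sa} as background, so the fair comparison is with the machinery the paper builds for its generalization, Theorem \ref{mythmsigma}. Your argument rests on the same two pillars in a different arrangement: the paper handles the far tail once and for all in Lemma \ref{reducenontan} (substituting $t\mapsto tt_0$ and applying dominated convergence in $\xi$, with the majorant supplied by (\ref{comp}) and the pointwise decay $\|\xi\|^n\phi(\xi)\to 0$), and then, for the restricted finite measure, uses the layer-cake identity (\ref{final}) via the distribution function $s\mapsto\theta(s)$ and dominated convergence in $s$; you instead take the Lebesgue--Stieltjes measure $d\lambda=-d\psi$ and run an explicit $\epsilon$--$\eta$--$R$--$t$ splitting into local, moderate, and far regions. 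What your route buys is that you only ever need the layer-cake as an \emph{upper bound} on $|\nu|$ of balls, never as an exact identity, which is why you can dispense with lower semicontinuity of $\phi$ --- consistent with the paper's remark following Theorem \ref{mythmgrowth} that this hypothesis is removable at Lebesgue points, whereas in Theorem \ref{mythmsigma} the paper needs Lemma \ref{ball} precisely because (\ref{final}) is used as an equality and the $\sigma$-point condition controls only the signed mass $(\mu-Lm)(B(x_k,t_k\theta(s)))$, not the total variation. Two compressed steps deserve tightening, though neither is a gap. First, as written the identity $\nu\ast\phi_t(x)=t^{-n}\int_0^\infty\nu(B(x,t\rho))\,d\lambda(\rho)$ can fail at jump radii of $\psi$ when $\nu$ charges the corresponding spheres, since Fubini produces $\lambda((u,\infty))=\psi(u+)$, which may differ from $\psi(u)$ at countably many $u$; since you only use upper bounds, replace it by the always-valid inequality $\phi_t(x-\xi)\le t^{-n}\lambda\left([\|x-\xi\|/t,\infty)\right)$, i.e., closed balls, and observe that the Lebesgue condition $|\nu|(B(0,r))<\epsilon r^n$ for $r<\eta_0$ passes to closed balls by shrinking radii. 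Second, the uniform-in-$x$ smallness of $\int_{\|\xi\|>R}\phi_{t_1}(x-\xi)\,d|\nu|(\xi)$ over a neighborhood $\|x\|\le\delta_1$ requires an $x$-free integrable majorant: fix $t_1<t_1'<t_0$ and use $\psi\left((\|\xi\|-\delta_1)/t_1\right)\le\psi\left(\|\xi\|/t_1'\right)$ for $\|\xi\|\ge t_1'\delta_1/(t_1'-t_1)$, together with $\phi_{t_1'}\in L^1(d|\nu|)$ from Remark \ref{existence}; your appeal to ``dominated convergence and radial decay'' is exactly this, but the intermediate scale $t_1'$ should be made explicit (and note that Remark \ref{existence} forces $t_1<t_0$). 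With these details spelled out, your proof is a complete and self-contained derivation of Saeki's theorem.
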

\begin{remark}
It was shown in \cite[Remark 1.6]{Sa} that the theorem above fails in the absence of condition (\ref{comp}). The assumption that $x_0\in\R^n$ is a Lebesgue point is so strong that one can prove the nontangential convergence of $\psi[\mu]$ (defined analogously to (\ref{convint})) at $x_0$ for any measurable function $\psi:\R^n\rightarrow\C$ (not necessarily radial, radially decreasing) such that $|\psi(x)|\leq\phi(x)$, for all $x\in\R^n$.
\end{remark}
Our main interest in this paper is to prove a generalization of Theorem \ref{shapiro} for convolution integral of the form (\ref{convint}). Our first lemma shows that condition (\ref{comp}) can be used to reduce matters to the case of a measure $\mu$ such that $|\mu|(\R^n)<\infty$.
\begin{lemma}\label{reducenontan}
Suppose $\phi$ is as in Theorem \ref{saeki}. If $\mu$ is a measure such that $|\mu|\ast\phi_{t_0}(0)$ is finite for some $t_0\in (0,\infty)$, then for all $\alpha>0$,
\begin{equation}\label{reducemunontan}
\lim_{\substack{(x,t)\to(0,0)\\(x,t)\in S(0,\alpha)}}\mu\ast\phi_t(x)=\lim_{\substack{(x,t)\to(0,0)\\(x,t)\in S(0,\alpha)}}\tilde{\mu}\ast\phi_t(x),
\end{equation}
where $\tilde{\mu}$ is the restriction of $\mu$ on the closed ball $\overline{B(0,t_0)}$. Moreover, if zero is a $\sigma$-point of $\mu$ then zero is also a $\sigma$-point of $\tilde{\mu}$ and vice versa. In both the cases,
\begin{equation*}
D_{\sigma}\mu(0)=D_{\sigma}\tilde{\mu}(0).
\end{equation*}\end{lemma}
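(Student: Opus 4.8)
The plan is to decompose $\mu=\tilde\mu+\nu$, where $\tilde\mu$ is the restriction of $\mu$ to $\overline{B(0,t_0)}$ and $\nu$ is its restriction to $\{\xi:\|\xi\|>t_0\}$, and to show that the ``far'' part contributes nothing in the limit, i.e. that $\nu\ast\phi_t(x)\to 0$ as $(x,t)\to(0,0)$ (in fact this will hold with no restriction to a cone, which is more than enough). Since $\phi$ is radial, hence even, $|\mu|\ast\phi_{t_0}(0)=\int_{\R^n}\phi_{t_0}(\xi)\,d|\mu|(\xi)<\infty$, so by Remark \ref{existence} the integrals $\mu\ast\phi_t(x)$, $\tilde\mu\ast\phi_t(x)$ and $\nu\ast\phi_t(x)$ all converge absolutely for $(x,t)\in\R^n\times(0,t_0)$ and satisfy $\mu\ast\phi_t(x)=\tilde\mu\ast\phi_t(x)+\nu\ast\phi_t(x)$ there. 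Once $\nu\ast\phi_t(x)\to 0$ is established, the asserted equality (\ref{reducemunontan}) of nontangential limits follows immediately (each limit exists iff the other does, and they agree).

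The crux is a uniform domination estimate coming from the comparison condition (\ref{comp}); producing this dominating function is the main obstacle, and the rest is routine. Fix $\|x\|<t_0/2$ and $t\in(0,t_0/2)$. For $\|\xi\|>t_0$ one has $\|x-\xi\|>\|\xi\|-t_0/2>\|\xi\|/2$, so radial monotonicity gives $\phi_t(x-\xi)=t^{-n}\phi((x-\xi)/t)\le t^{-n}\phi(\xi/(2t))$. Writing $y=\xi/t_0$ (so $\|y\|>1$) and $s=2t/t_0\in(0,1)$, a direct rescaling gives $t^{-n}\phi(\xi/(2t))=(2/t_0)^n\phi_s(y)$, and (\ref{comp}) yields $\phi_s(y)\le M\phi(y)$ with $M=\sup\{\phi_s(y)/\phi(y):s\in(0,1),\ \|y\|>1\}<\infty$ (here strict positivity of $\phi$ is used). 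Hence
\begin{equation*}
\phi_t(x-\xi)\le 2^n M\,\phi_{t_0}(\xi)\qquad\text{for }\|\xi\|>t_0,\ \|x\|<t_0/2,\ t\in(0,t_0/2),
\end{equation*}
and the right-hand side is $|\mu|$-integrable over $\{\|\xi\|>t_0\}$. I would then split $\int_{\|\xi\|>t_0}\phi_t(x-\xi)\,d|\mu|(\xi)$ into an integral over the compact annulus $\{t_0<\|\xi\|\le R\}$ and a tail over $\{\|\xi\|>R\}$. The tail is at most $2^nM\int_{\|\xi\|>R}\phi_{t_0}\,d|\mu|$, which is small for large $R$. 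On the annulus, $\|x-\xi\|>t_0/2$ forces $\phi_t(x-\xi)\le t^{-n}\phi(z_t)$ for any $z_t$ with $\|z_t\|=t_0/(2t)$, that is $\phi_t(x-\xi)\le(2/t_0)^n\|z_t\|^n\phi(z_t)$; since a radially decreasing integrable function satisfies $\|y\|^n\phi(y)\to 0$ as $\|y\|\to\infty$ (a one-line consequence of $\int_{u<\|y\|\le 2u}\phi\to 0$ together with radial monotonicity), this bound tends to $0$ as $t\to 0$, and the annulus integral tends to $0$ because $|\mu|$ is finite there. Choosing $R$ first and then $t$ small gives $\nu\ast\phi_t(x)\to 0$.

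For the statement about $\sigma$-points, the key observation is that $\nu$ is carried by $\{\xi:\|\xi\|>t_0\}$, so $\nu(B(x,r))=0$ whenever $B(x,r)\subseteq B(0,t_0)$. Concretely, if $\delta\le t_0/2$ then $\|x\|<\delta$ and $r<\delta$ force $\|x\|+r<t_0$, whence $B(x,r)\subseteq B(0,t_0)$ and $\nu(B(x,r))=0$. Consequently, for such $\delta$,
\begin{equation*}
(\mu-Lm)(B(x,r))=(\tilde\mu-Lm)(B(x,r))\qquad\text{whenever }\|x\|<\delta\ \text{and}\ r<\delta.
\end{equation*}
Thus the defining inequality of a $\sigma$-point at $0$ with value $L$ holds for $\mu$ (for some small $\delta$) if and only if it holds for $\tilde\mu$, proving that $0$ is a $\sigma$-point of $\mu$ if and only if it is one of $\tilde\mu$, and that in either case $D_\sigma\mu(0)=D_\sigma\tilde\mu(0)$.
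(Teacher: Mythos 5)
Your proposal is correct and follows essentially the same route as the paper: the same decomposition of $\mu$ into its restriction to $\overline{B(0,t_0)}$ and the far part, the same geometric estimate $\|x-\xi\|>\|\xi\|/2$, the same rescaling that turns the comparison condition (\ref{comp}) into the domination $\phi_t(x-\xi)\le 2^nM\phi_{t_0}(\xi)$, the same fact $\|y\|^n\phi(y)\to 0$, and the same observation that $\mu$ and $\tilde\mu$ agree on all balls $B(x,r)$ with $\|x\|,r<t_0/2$ for the $\sigma$-point claim. The only cosmetic differences are that you replace the paper's appeal to the dominated convergence theorem by an explicit annulus/tail $\epsilon$-splitting and note that the far part vanishes without any cone restriction, both of which are fine.
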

\begin{proof}
In view of Remark \ref{existence}, without loss of generality we assume that $t_0<1$. We write for $0<t<t_0$, $x\in\R^n$,
\begin{equation}\label{rel1nontan}
\mu\ast\phi_t(x)=\tilde{\mu}\ast\phi_t(x)+\int_{\{\xi\in\R^n:\|\xi\|>t_0 \}}\phi_t(x-\xi)\:d\mu (\xi).
\end{equation}
Since $\phi$ is a radial function, we will write for the sake of simplicity $\phi(r)=\phi(\xi)$, whenever $r=\|\xi\|$. For any $r\in (0,\infty)$, we have
\begin{equation*}
\int_{r/2\leq \|\xi\|\leq r}\phi (\xi)\:dm(\xi)\geq \omega_{n-1}\phi (r)\int_{r/2}^rs^{n-1}\:ds=A_nr^n\phi (r),
\end{equation*}
where $\omega_{n-1}$ is the surface area of the unit sphere $S^{n-1}$ and $A_n$ is a positive constant which depends only on the dimension.
Since $\phi$ is an integrable function, the integral on the left hand side converges to zero as $r$ goes to zero and infinity. Hence, it follows that
\begin{equation}\label{rel2}
\lim_{\|\xi\|\to 0}\|\xi\|^n\phi(\xi)=\lim_{\|\xi\|\to \infty}\|\xi\|^n\phi (\xi)=0.
\end{equation}
We denote the integral appearing on the right-hand side of (\ref{rel1nontan}) by $I(x,t)$. We fix $\alpha>0$. We observe that for $0<t<\min\{\frac{1}{2},\:\frac{t_0}{2\alpha}\}$,
\begin{equation*}
\|x-\xi\|\geq\|\xi\|-\|x\|\geq\|\xi\|-\frac{\|\xi\|}{2}=\frac{\|\xi\|}{2},
\end{equation*}
whenever $\|\xi\|>t_0$ and $(x,t)\in S(0,\alpha)$. Therefore, using the fact that $\phi$ is radially decreasing, we obtain for $(x,t)\in S(0,\alpha)\cap\big(\R^n\times(0,\min\{\frac{1}{2},\:\frac{t_0}{2\alpha}\})\big)$,
\begin{eqnarray}
|I(x,tt_0)|&=&(tt_0)^{-n}\left|\int_{\{\xi\in\R^n:\|\xi\|>t_0 \}}\phi\left(\frac{x-\xi}{tt_0}\right)\:d\mu(\xi)\right|\nonumber\\
&\leq&(tt_0)^{-n}\int_{\{\xi\in\R^n:\|\xi\|>t_0 \}}\phi\left(\frac{x-\xi}{tt_0}\right)\:d|\mu|(\xi)\nonumber\\
&\leq&(tt_0)^{-n}\int_{\{\xi\in\R^n:\|\xi\|>t_0 \}}\phi\left(\frac{\xi}{2tt_0}\right)\:d|\mu|(\xi)\nonumber\\
&=&\int_{\{\xi\in\R^n:\|\xi\|>t_0 \}}\frac{\left(\frac{\|\xi\|}{tt_0}\right)^n\phi\left(\frac{\xi}{2tt_0} \right)}{\|\xi\|^n\phi_{t_0}(\xi)}\phi_{t_0}(\xi)\:d|\mu|(\xi)\label{rel3nontan}
\end{eqnarray}
From (\ref{rel2}) we get that
\begin{equation*}
\lim_{t\to 0}\left(\frac{\|\xi\|}{tt_0}\right)^n\phi\left(\frac{\xi}{2tt_0}\right)=0,
\end{equation*}
for each fixed $\xi\in\R^n$. On the other hand, by the comparison condition (\ref{comp}), there exists some positive constant $C$ such that
\begin{equation*}
\frac{\left(\frac{\|\xi\|}{tt_0}\right)^n\phi\left(\frac{\xi}{2tt_0} \right)}{\|\xi\|^n\phi_{t_0}(\xi)}=2^n\frac{\phi_{2t}\left(\frac{\xi}{t_0}\right)}{\phi\left(\frac{\xi}{t_0}\right)}\leq C,
\end{equation*}
for $\|\xi\|>t_0$ and $0<t<1/2$. Since $|\mu|\ast\phi_{t_0}(0)<\infty$, that is, $\phi_{t_0}\in L^1(\R^n,d|\mu|)$, by the dominated convergence theorem, it follows from (\ref{rel3nontan}) that
\begin{equation*}
\lim_{\substack{(x,t)\to(0,0)\\(x,t)\in S(0,\alpha)}}|I(x,tt_0)|=0.
\end{equation*}
Consequently,
\begin{equation*}
\lim_{\substack{(x,t)\to(0,0)\\(x,t)\in S(0,\alpha)}}\int_{\{\xi\in\R^n:\|\xi\|>t_0 \}}\phi_t(x-\xi)\:d\mu (\xi)=\lim_{\substack{(x,t)\to(0,0)\\(x,t)\in S(0,\alpha)}}I(x,t)=\lim_{\substack{(x,t)\to(0,0)\\(x,t)\in S(0,\alpha)}}I(x,tt_0^{-1}t_0)=0,
\end{equation*}
as $0<t_0<1$. This proves (\ref{reducemunontan}). Suppose that $D_{\sigma}\mu(0)=L$. We take $\epsilon>0$. Then there exists $0<\delta<t_0/2$ such that
\begin{equation*}
|(\mu-Lm)(B(x,r))|<\epsilon(\|x\|+r)^n,
\end{equation*}
whenever $\|x\|<\delta$ and $r<\delta$. But for $\|x\|<\delta$ and $r<\delta$, we observe that
\begin{equation*}
B(x,r)\subset B(0,2\delta)\subset B(0,t_0).
\end{equation*}
Using this observation and the definition of $\tilde{\mu}$ in the last inequality, we get that
\begin{equation*}
|(\tilde{\mu}-Lm)(B(x,r))|<\epsilon(\|x\|+r)^n,
\end{equation*}
whenever $\|x\|<\delta$ and $r<\delta$. This shows that
\begin{equation*}
D_{\sigma}\tilde{\mu}(0)=L=D_{\sigma}\mu(0).
\end{equation*}
Proof of the converse implication is similar.
\end{proof}
Before proceed to our next lemma, we recall that a real valued function $f$ on a topological space $X$ is said to be lower semicontinuous if $\{x\in X:f(x)>s\}$ is open for every real number $s$ \cite[P.37]{Rureal}.
\begin{lemma}\label{ball}
Assume that $\phi:\R^n\rightarrow[0,\infty)$ is a radial, radially decreasing, integrable function. If $\phi$ is lower semicontinuous then, for every $t\in(0,\phi(0))$
\begin{equation*}
B_t=\{x\in\R^n:\phi(x)>t\},
\end{equation*}
is an open ball centred at zero with some finite radius $\theta(t)$ (say).  
\end{lemma}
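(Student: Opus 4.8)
The plan is to exploit the radial monotonicity to reduce $B_t$ to a sublevel condition in a single real variable, and then to use lower semicontinuity only to pin down the boundary. First I would record that, since $\phi$ is radial and radially decreasing, there is a non-increasing function $g:[0,\infty)\to[0,\infty)$ with $\phi(x)=g(\|x\|)$; the hypothesis $t<\phi(0)=g(0)$ guarantees that $0\in B_t$, so $B_t\neq\emptyset$. I would then introduce the candidate radius
\[
\theta(t)=\sup\{s\ge 0:g(s)>t\},
\]
and note that, because $g$ is non-increasing, the set $\{s\ge 0:g(s)>t\}$ is an interval starting at $0$.

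Next I would establish the two basic properties of $\theta(t)$. Finiteness follows from integrability: if $\theta(t)=\infty$, then by monotonicity $g(s)>t$ for every $s\ge 0$, whence $\phi>t>0$ on all of $\R^n$ and $\int_{\R^n}\phi\,dm=\infty$, contradicting $\|\phi\|_{L^1(\R^n)}=1$. Positivity, $\theta(t)>0$, is immediate since $0$ belongs to the set whose supremum defines $\theta(t)$ and, by lower semicontinuity, $B_t$ is open and hence contains a genuine ball about $0$.

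The core is to identify $B_t$ with the open ball $B(0,\theta(t))$, and here monotonicity of $g$ delivers both outer inclusions at once. If $\|x\|<\theta(t)$, then $\|x\|$ is not an upper bound for $\{s:g(s)>t\}$, so there is $s_0>\|x\|$ with $g(s_0)>t$; monotonicity then gives $\phi(x)=g(\|x\|)\ge g(s_0)>t$, that is, $x\in B_t$. Conversely, if $\|x\|>\theta(t)$, then $\|x\|$ exceeds the supremum, so $g(\|x\|)\le t$ and $x\notin B_t$. Thus $B_t$ is squeezed between the open and closed balls of radius $\theta(t)$, and, being a radial set, it either contains the entire sphere $\{\|x\|=\theta(t)\}$ or none of it.

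Finally I would invoke lower semicontinuity to select the open ball. Since $\phi$ is lower semicontinuous, $B_t$ is open by definition of the superlevel set. Were the sphere $\{\|x\|=\theta(t)\}$ contained in $B_t$, we would have $B_t=\overline{B(0,\theta(t))}$, a closed ball with nonempty interior, which is not open; this contradiction forces $B_t=B(0,\theta(t))$, an open ball of finite radius $\theta(t)$, as claimed. The only delicate point is precisely this boundary behaviour: radial monotonicity alone leaves undecided whether the defining sphere lies in $B_t$, and it is exactly lower semicontinuity that resolves it. I expect this last step to be the main, if small, obstacle.
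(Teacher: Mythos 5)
Your proof is correct and takes essentially the same route as the paper: both arguments squeeze $B_t$ between $B(0,\theta(t))$ and $\overline{B(0,\theta(t))}$ via radial monotonicity, use integrability to force $\theta(t)<\infty$, and invoke lower semicontinuity at the end to rule out the closed-ball alternative. The only cosmetic difference is that you define $\theta(t)$ through the one-variable profile $g$ as $\sup\{s\geq 0: g(s)>t\}$, whereas the paper sets $\theta(t)=\sup\{r>0:\overline{B(0,r)}\subset B_t\}$; these coincide and the proofs are otherwise identical.
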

\begin{proof}
Since $\phi$ is integrable, there exists $x_0\in\R^n$ such that $\phi(x_0)\leq t$. For any $x\in B_t$,
\begin{equation*}
\phi(x)>t\geq\phi(x_0).
\end{equation*}
As $\phi$ is radially decreasing, the inequality above implies that $\|x\|<\|x_0\|$ and hence $B_t$ is bounded. Therefore,
\begin{equation*}
\theta(t):=\sup\{r>0:\overline{B(0,r)}\subset B_t\}<\infty.
\end{equation*}
We claim that $B(0,\theta(t))$ is contained in $B_t$. To see this, for $x\in B(0,\theta(t))$, we take $r\in(\|x\|,\theta(t))$.  By the definition of $\theta(t)$, this implies that $x\in B(0,r)\subset B_t$. On the other hand, if $x\in B_t$,
\begin{equation*}
\phi(\xi)\geq\phi(x)>t,\:\:\:\text{for all}\:\:\xi\in\overline{B(0,\|x\|)}.
\end{equation*}
Thus, $\overline{B(0,\|x\|)}\subset B_t$ for all $x\in B_t$. Consequently, 
\begin{equation}\label{setcontainment}
B(0,\theta(t))\subset B_t\subset\overline{B(0,\theta(t))}.
\end{equation} We shall show that  
\begin{equation*}
B_t=B(0,\theta(t))\:\:\:\text{or}\:\:\:B_t=\overline{B(0,\theta(t))}.
\end{equation*}
Suppose there exists $\xi\in B_t\setminus B(0,\theta(t))$. Then by (\ref{setcontainment}), $\|\xi\|=\theta(t)$ and hence by radiality of $\phi$, $B_t=\overline{B(0,\theta(t))}$. Since $\phi$ is lower semicontinuous, $B_t=B(0,\theta(t))$.
\end{proof}
\begin{remark}
It follows from the proof that if $\phi$ is not a lower semicontinuous function then $B_t$ may turn out to be a closed ball centred at origin. This can be seen from the following example. Define $\phi:\R^n\rightarrow(0,\infty)$ by
\begin{equation*}
\phi(x)=\begin{cases}
e^{-\|x\|},&\|x\|\leq 1\\e^{-2\|x\|},&\|x\|>1.
\end{cases}
\end{equation*}
Then for any $t\in(e^{-2},e^{-1})$, $B_t=\overline{B(0,1)}$.
\end{remark}
We are now ready to present our main result which generalizes Theorem \ref{shapiro}.
\begin{theorem}\label{mythmsigma}
Suppose $\phi$ and $\mu$ be as in Theorem \ref{saeki}. Further assume that $\phi$ is lower semicontinuous. If $x_0\in\R^n$ is a $\sigma$-point of $\mu$ with $D_{\sigma}\mu(x_0)=L\in\C$, then $\phi[\mu]$ has nontangential limit $L$ at $x_0$.
\end{theorem}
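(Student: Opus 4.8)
The plan is to pass to a model situation and then exploit the radial structure of $\phi$ through Lemma \ref{ball}. After a translation I may assume $x_0=0$ and write $L=D_\sigma\mu(0)$. Since $\int_{\R^n}\phi_t(x-\xi)\,dm(\xi)=1$, one has $L=L\int_{\R^n}\phi_t(x-\xi)\,dm(\xi)$, so it suffices to show that
\[
\mu\ast\phi_t(x)-L=\int_{\R^n}\phi_t(x-\xi)\,d\nu(\xi)\longrightarrow 0
\]
as $(x,t)\to(0,0)$ inside $S(0,\alpha)$, where $\nu=\mu-Lm$. By Lemma \ref{reducenontan} I may assume in addition that $|\mu|(\R^n)<\infty$ and that $\mu$ is supported in a fixed ball, while $0$ remains a $\sigma$-point of $\mu$ with the same value $L$; the $\sigma$-point hypothesis then reads $|\nu(B(x,r))|<\epsilon(\|x\|+r)^n$ whenever $\|x\|,r<\delta$.

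The key step is a layer-cake representation. Writing $\phi(y)=\lambda(\|y\|)$ with $\lambda$ decreasing, Lemma \ref{ball} gives, for $0<s<\phi(0)$, that $\{\phi>s\}=B(0,\theta(s))$, whence $\phi(y)=\int_0^{\phi(0)}\mathbf 1_{B(0,\theta(s))}(y)\,ds$ and so $\phi_t(x-\xi)=t^{-n}\int_0^{\phi(0)}\mathbf 1_{B(x,t\theta(s))}(\xi)\,ds$. Integrating against $d\mu$ and against $L\,dm$ separately---each piece being absolutely convergent since $\int_0^{\phi(0)}|\mu|(B(x,t\theta(s)))\,ds=t^n\,|\mu|\ast\phi_t(x)<\infty$ and $\int_0^{\phi(0)}m(B(0,t\theta(s)))\,ds=t^n$---Fubini's theorem yields
\[
\mu\ast\phi_t(x)-L=t^{-n}\int_0^{\phi(0)}\nu\big(B(x,t\theta(s))\big)\,ds .
\]
Two facts about $\theta$ drive the estimates: the identity $\int_0^{\phi(0)}m(B(0,\theta(s)))\,ds=\int_{\R^n}\phi\,dm=1$, which gives $\int_0^{\phi(0)}\theta(s)^n\,ds=1/m(B(0,1))<\infty$, and the decay $\lim_{r\to\infty}r^n\lambda(r)=0$ recorded in \eqref{rel2}.

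I would then split the $s$-integral at the level $s_\ast$ with $t\theta(s_\ast)=\delta$, i.e. into the large balls $\{t\theta(s)\ge\delta\}$ (small $s$) and the small balls $\{t\theta(s)<\delta\}$ (large $s$). On the large balls I bound $|\nu(B(x,\rho))|\le|\mu|(\R^n)+|L|\,m(B(0,\rho))$; the two resulting terms are $t^{-n}|\mu|(\R^n)\,\lambda(\delta/t)$ and $|L|\,m(B(0,1))\int_0^{s_\ast}\theta(s)^n\,ds$, both tending to $0$ as $t\to0$ by \eqref{rel2} and by $s_\ast=\lambda(\delta/t)\to0$. On the small balls, for $(x,t)\in S(0,\alpha)$ close to the origin one has $\|x\|<\alpha t<\delta$ and $t\theta(s)<\delta$, so the $\sigma$-point inequality applies and, using $\|x\|<\alpha t$, gives $|\nu(B(x,t\theta(s)))|<\epsilon\,t^n(\alpha+\theta(s))^n$, hence a contribution bounded by $\epsilon\int_{s_\ast}^{\phi(0)}(\alpha+\theta(s))^n\,ds$.

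The main obstacle is precisely this last estimate. When $\phi(0)<\infty$ the integral $\int_0^{\phi(0)}(\alpha+\theta(s))^n\,ds$ is finite---$\theta(s)^n$ is integrable by the identity above, while $\int_0^{\phi(0)}\alpha^n\,ds=\alpha^n\phi(0)<\infty$---so the small-ball term is $O(\epsilon)$ and the proof closes; this already covers the Poisson and Gauss--Weierstrass kernels. When $\phi(0)=\infty$, however, the constant $\alpha^n$ is integrated over an $s$-interval of infinite length (the tiniest balls sit at the infinite peak of the kernel), and the crude $\sigma$-bound diverges. The way out is to notice that the hypothesis forces extra regularity of $\mu$: by Remark \ref{existence} the finiteness of $|\mu|\ast\phi_{t_0}$ propagates to every $(x,t)$, and since $\phi_t(0)=t^{-n}\phi(0)=\infty$ this is impossible unless $\mu$ carries no atoms; consequently $\nu(B(x,\rho))\to0$ as $\rho\to0$ for every $x$, and on the tiniest balls one must replace the $\sigma$-estimate by this genuine smallness. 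Carrying out this refinement uniformly over the cone $S(0,\alpha)$ is the delicate point of the argument, and I expect it to be where the technical work concentrates.
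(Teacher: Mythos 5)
Your argument is, at its core, the paper's own proof: the same reduction to $x_0=0$ and to $|\mu|(\R^n)<\infty$ via Lemma \ref{reducenontan}, the same layer-cake representation through Lemma \ref{ball}, the same identity $\int_0^{\phi(0)}\theta(s)^n\,ds=1/m(B(0,1))$ obtained from $\|\phi\|_{L^1}=1$, and the same use of the $\sigma$-point estimate $|\nu(B(x,t\theta(s)))|<\epsilon t^n(\alpha+\theta(s))^n$ inside the cone. You differ only in how the limit is extracted: the paper fixes $s$, sends $(x_k,t_k)\to(0,0)$ in $S(0,\alpha)$, and applies dominated convergence with dominating function $m(B(0,1))(M\mu(0)+L)(\theta(s)+\alpha)^n$, having first shown $M\mu(0)<\infty$; your explicit splitting at $s_\ast$ with $t\theta(s_\ast)=\delta$, where the large-ball term is killed by $t^{-n}\lambda(\delta/t)\to 0$ from (\ref{rel2}) together with absolute continuity of $\int_0^{s_\ast}\theta(s)^n\,ds$, is a quantitative version of the same step, and it has the minor advantage of never needing the maximal bound $M\mu(0)$ (your crude bound $|\nu(B(x,\rho))|\le|\mu|(\R^n)+|L|m(B(x,\rho))$ suffices on the large balls). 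Up to the routine boundary care in identifying $s_\ast$ with $\lambda(\delta/t)$ ($\theta$ is only a generalized inverse of $\lambda$, but $s_\ast\le\lambda(\delta/(2t))$ settles it), this part is correct.

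The one real defect is your final paragraph: the case $\phi(0)=\infty$ that you call the main obstacle is vacuous. Theorem \ref{saeki}, whose hypotheses Theorem \ref{mythmsigma} inherits, takes $\phi:\R^n\to(0,\infty)$, so $\phi(0)$ is a finite number, and radial decrease makes $\phi$ bounded by $\phi(0)$; every layer-cake integral runs over the finite interval $(0,\phi(0))$, and your own bound $\int_0^{\phi(0)}(\alpha+\theta(s))^n\,ds\le 2^n\bigl(\alpha^n\phi(0)+1/m(B(0,1))\bigr)<\infty$ always applies. Indeed the paper's proof leans on the same finiteness at exactly the same spot: its dominating function $(\theta(s)+\alpha)^n$ is integrable on $(0,\phi(0))$ only because $\phi(0)<\infty$ (the paper's remark that it suffices to check integrability of $\theta(s)^n$ is true only for this reason). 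So the speculative repair you sketch for unbounded kernels---ruling out atoms because $\phi_t(0)=\infty$, then trying to make the smallness uniform over the cone---is not needed for the statement at hand, and, as you concede, you do not carry it out. Delete that paragraph and your proof is complete as written.
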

\begin{proof}
Without loss of generality, we can assume $x_0=0$. Indeed, we consider the translated measure $\mu_0=\tau_{-x_0}\mu$, where
\begin{equation*}
\tau_{-x_0}\mu (E)=\mu(E+x_0),
\end{equation*}
for all Borel subsets $E\subset \R^n$. Using translation invariance of the Lebesgue measure it follows that
\begin{equation*}
(\mu_0-Lm)(B(x,r))=(\mu-Lm)(B(x+x_0,r)).
\end{equation*}
We fix $\epsilon>0$. Since $x_0$ is a $\sigma$-point of $\mu$ with $D_{\sigma}\mu(x_0)=L$, the equality above implies that there exists $\delta>0$ such that
\begin{equation*}
|(\mu_0-Lm)(B(x,r))|<\epsilon(\|x\|+r)^n,\:\:\:\text{whenever}\:\:\|x\|<\delta,\:\: r<\delta.
\end{equation*}
This shows that $0$ is a $\sigma$-point of $\mu_0$ with $D_{\sigma}\mu_0(0)=L$. As translation commutes with convolution, it also follows that
\begin{equation}\label{transphi}
\mu_0\ast\phi_t(x)=(\tau_{-x_0}\mu\ast\phi_t)(x)=\tau_{-x_0}(\mu\ast\phi_t )(x)=\mu\ast\phi_t(x+x_0),
\end{equation}
for any $(x,t)\in\R^n\times (0,t_0)$. We fix an arbitrary positive number $\alpha$. As $(x,t)\in S(0,\alpha)$ if and only if $(x_0+x,t)\in S(x_0,\alpha)$, one infers from (\ref{transphi}) that
\begin{equation*}
\lim_{\substack{(x,t)\to(0,0)\\(x,t)\in S(0,\alpha)}}\phi[\mu_0](x,t)=\lim_{\substack{(\xi,t)\to(x_0,0)\\(\xi,t)\in S(x_0,\alpha)}}\phi[\mu](\xi,t).
\end{equation*}
Hence, it suffices to prove the theorem under the assumption that $x_0=0$. Applying Lemma \ref{reducenontan}, we can restrict $\mu$ on $\overline{B(0,t_0)}$, if necessary, to assume that $|\mu|(\R^n)<\infty$. Since $D_{\sigma}\mu(0)=L$,
\begin{equation*}
\lim_{r\to 0}\frac{\mu(B(0,r))}{m(B(0,r))}=L.
\end{equation*}
Therefore, there exists a positive constant $r_0$ such that
\begin{equation*}
\frac{|\mu(B(0,r))|}{m(B(0,r))}<L+1,\:\:\:\text{for all}\:\:r<r_0.
\end{equation*}
Using finiteness of the total variation of $\mu$, we get that
\begin{equation*}
\frac{|\mu(B(0,r))|}{m(B(0,r))}\leq\frac{|\mu|(B(0,r))}{m(B(0,r))}\leq\frac{|\mu|(\R^n)}{m(B(0,r_0))},\:\:\:\text{for all}\:\:r\geq r_0.
\end{equation*}
Combining above two inequalities, we obtain
\begin{equation}\label{boundofmu}
M{\mu}(0):=\sup_{r>0}\frac{|\mu(B(0,r))|}{m(B(0,r))}<\infty.
\end{equation}
For each $0<t<\phi(0)$, we define
\begin{equation*}
B_t=\{x\in\R^n:\phi(x)>t\}.
\end{equation*}
By Lemma \ref{ball}, $B_t$ is an open ball with centre at $0$ and radius $\theta(t)$. Clearly, $\theta$ is a monotonically decreasing function in $(0,\phi(0))$ and hence measurable. We also note that for any $r\in(0,\infty)$ and $x\in\R^n$,
\begin{equation*}
\Big\{\xi\in\R^n:\phi\left(\frac{x-\xi}{r}\right)>t\Big\}
\end{equation*}
is an open ball with centre at $x$ and radius $r\theta(t)$. Let $\{(x_k,t_k)\}_{k=1}^{\infty}$ be a sequence in $S(0,\alpha)$ converging to $(0,0)$. Without loss of generality we assume that $t_k\in (0,t_0)$ for all $k$. As $\int_{\R^n}\phi(x)\:dm(x)=1$, we can write
\begin{eqnarray}
\mu\ast\phi_{t_k}(x_k)-L&=&t_k^{-n}\int_{\R^n}\phi\left(\frac{x_k-\xi}{t_{k}}\right)\:d\mu(\xi)-Lt_k^{-n}\int_{\R^n}\phi\left(\frac{x_k-\xi}{t_{k}}\right)\:dm(\xi)\nonumber\\
&=&t_k^{-n}\int_{\R^n}\phi\left(\frac{x_k-\xi}{t_{k}}\right)\:d(\mu-Lm)(\xi)\nonumber\\
&=&t_k^{-n}\int_{\R^n}\int_{0}^{\phi\left(\frac{x_k-\xi}{t_{k}}\right)}\:ds\:d(\mu-Lm)(\xi).\nonumber
\end{eqnarray}
As $|\mu-Lm|\ast\phi_t(x)$ is finite for all $(x,t)\in\R^n\times(0,t_0)$, applying Fubini's theorem on the right hand side of the last equality, we obtain
\begin{eqnarray}
\mu\ast\phi_{t_k}(x_k)-L&=&t_k^{-n}\int_{0}^{\phi(0)}\left(\mu-Lm\right)\left(\Big\{\xi\in\R^n:\phi\left(\frac{x_k-\xi}{t_k}\right)>s\Big\}\right)\:ds\nonumber\\
&=&\int_{0}^{\phi(0)}\frac{(\mu-Lm)\left(B(x_k,t_k\theta(s))\right)}{\left(\|x_k\|+t_k\theta(s)\right)^n}\times\left(\frac{\|x_k\|+t_k\theta(s)}{t_k}\right)^n\:ds\label{final}.
\end{eqnarray}
Since $D_{\sigma}\mu(0)=L$,
\begin{equation*}
\lim_{(x,r)\to(0,0)}\frac{(\mu-Lm)(B(x,r))}{(\|x\|+r)^n}=0.
\end{equation*}
Therefore, for each $s\in (0,\phi(0))$, integrand on the right hand side of (\ref{final}) has limit zero as $k\to\infty$ because $\|x_k\|/t_k<\alpha$, for all $k$. Moreover, using (\ref{boundofmu}), the integrand is bounded by the function
\begin{equation*}
s\mapsto m(B(0,1))(M\mu(0)+L)(\theta(s)+\alpha)^n,\:\:\:\: s\in(0,\phi(0)).
\end{equation*}
In order to apply the dominated convergence theorem on the right hand side of (\ref{final}), we need to show that this function is integrable in $(0,\phi(0))$. For this, it is enough to show that the function $s\mapsto\theta(s)^n$ is integrable in $(0,\phi(0))$. Using a well-known formula involving distribution functions \cite[Theorem 8.16]{Rureal}, we observe that
\begin{eqnarray*}
\int_{\R^n}\phi(x)\:dm(x)&=&\int_{0}^{\phi(0)}m\left(\{x\in\R^n:\phi(x)>s\}\right)\:ds\\
&=&\int_{0}^{\phi(0)}m(B_s)\:ds\\
&=&m(B(0,1))\int_{0}^{\phi(0)}\theta(s)^n\:ds.
\end{eqnarray*}
Hence, applying the dominated convergence theorem on the right hand side of (\ref{final}) we obtain
\begin{equation*}
\lim_{k\to\infty}\mu\ast\phi_{t_k}(x_k)=L.
\end{equation*}
This completes the proof.
\end{proof}
Shapiro also considered nontangential limits of Gauss-Weierstrass integral of a $L^p$-function \cite[Theorem 2]{Sh}. We recall that the Gauss-Weierstrass kernel or the heat kernel of $\R^{n+1}_+$ is given by
\begin{equation*}
W(x,t)=(4\pi t)^{-\frac{n}{2}}e^{-\frac{\|x\|^2}{4t}},\:(x,t)\in\R^{n+1}_+.
\end{equation*}
The Gauss-Weierstrass integral of a measure $\mu$ is given by the convolution
\begin{equation*}
W\mu(x,t)=\int_{\R^n}W(x-y,t)\:d\mu(y),\:\:\:\: x\in\R^n,\:\: t\in (0,\infty ),
\end{equation*}
whenever the above integral exists. Recalling (\ref{gaussian}), we observe that
\begin{equation}\label{gaussint}
W\mu(x,t)=\mu\ast w_{\sqrt{t}}(x),\:\:\:\:(x,t)\in\R^{n+1}_+.
\end{equation}
As an easy corollary of Theorem \ref{mythmsigma}, we get the following generalization of the above mentioned theorem of Shapiro.
\begin{corollary}
Suppose $\mu$ is a measure on $\R^n$ such that $W|\mu|(x_1,t_0)$ is finite for some $x_1\in\R^n$ and $t_0>0$. If $x_0\in\R^n$ is a $\sigma$-point of $\mu$ with $D_{\sigma}\mu(x_0)=L\in\C$, then the Gauss-Weierstrass integral $W\mu$ has nontangential limit $L$ at $x_0$.
\end{corollary}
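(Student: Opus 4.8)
The plan is to deduce the corollary directly from Theorem \ref{mythmsigma} applied to the Gaussian kernel $w$ of (\ref{gaussian}), after accounting for the time-reparametrization built into (\ref{gaussint}). First I would check that $w$ satisfies all the hypotheses of Theorem \ref{mythmsigma}: by construction $w$ is radial, radially decreasing, and normalized so that $\|w\|_{L^1(\R^n)}=1$; it is strictly positive and continuous, hence in particular lower semicontinuous; and it satisfies the comparison condition (\ref{comp}), as already recorded in the excerpt immediately after (\ref{gaussian}). Next I would translate the finiteness hypothesis: by (\ref{gaussint}) we have $W|\mu|(x_1,t_0)=|\mu|\ast w_{\sqrt{t_0}}(x_1)$, so the assumption $W|\mu|(x_1,t_0)<\infty$ is exactly the statement that $|\mu|\ast w_{s_0}(x_1)$ is finite for $s_0=\sqrt{t_0}$. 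Thus Theorem \ref{mythmsigma} applies to $w$ and $\mu$.

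Applying Theorem \ref{mythmsigma} then gives that the convolution integral $w[\mu](x,s)=\mu\ast w_s(x)$ has nontangential limit $L$ at $x_0$; that is, for every $\alpha>0$, $\mu\ast w_s(x)\to L$ as $(x,s)\to(x_0,0)$ within the cone $S(x_0,\alpha)$. It remains to transfer this conclusion to $W\mu$, whose defining parameter $t$ is related to the approximate-identity parameter by $s=\sqrt{t}$.

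The key step is the cone comparison. Fix $\alpha>0$ and take any sequence $(x_k,t_k)\in S(x_0,\alpha)$ with $(x_k,t_k)\to(x_0,0)$; I want to show $W\mu(x_k,t_k)=\mu\ast w_{\sqrt{t_k}}(x_k)\to L$. Set $s_k=\sqrt{t_k}$, so that $s_k\to 0$ and $x_k\to x_0$. For all $k$ large enough that $t_k<1$ we have $t_k<\sqrt{t_k}=s_k$, whence $\|x_k-x_0\|<\alpha t_k<\alpha s_k$, so that $(x_k,s_k)\in S(x_0,\alpha)$ and $(x_k,s_k)\to(x_0,0)$. Equivalently, the image of the part $\{t<1\}$ of the cone $S(x_0,\alpha)$ under the map $(x,t)\mapsto(x,\sqrt{t})$ is contained in $S(x_0,\alpha)$. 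By the nontangential convergence of $w[\mu]$ just established, $\mu\ast w_{s_k}(x_k)\to L$, and therefore $W\mu(x_k,t_k)\to L$. Since $\alpha>0$ and the sequence were arbitrary, $W\mu$ has nontangential limit $L$ at $x_0$.

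I do not expect a serious obstacle here; the only point requiring care is the reparametrization $s=\sqrt{t}$, and the elementary inequality $t<\sqrt{t}$ for $0<t<1$ shows that this map only shrinks cones rather than widening them, so nontangential approach in the $W\mu$ variable lands inside a nontangential region for the approximate identity $w_s$. Consequently the conclusion of Theorem \ref{mythmsigma} transfers without any additional estimate.
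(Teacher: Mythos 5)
Your proposal is correct and follows essentially the same route as the paper: apply Theorem \ref{mythmsigma} to the Gaussian $w$ (after noting via (\ref{gaussint}) that $W|\mu|(x_1,t_0)=|\mu|\ast w_{\sqrt{t_0}}(x_1)$), then transfer the conclusion through the reparametrization $s=\sqrt{t}$ using the inequality $t<\sqrt{t}$ for $0<t<1$, which the paper phrases as the containment $S(x_0,\alpha)\cap\{t<\tfrac{1}{\alpha}\}\subset\{\|x-x_0\|<\sqrt{\alpha t}\}$. Your sequential formulation of the cone comparison is just a restatement of that set inclusion, so there is no substantive difference.
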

\begin{proof}
We fix an arbitrary positive number $\alpha$. We have already mentioned that $w$ satifies the comparison condition (\ref{comp}). Moreover, $\|w\|_{L^1(\R^n)}=1$ (see \cite[P.9]{SW}). Thus, $w$ satisfies all the hypothesis of Theorem \ref{mythmsigma}. Hence, in view of (\ref{gaussint}), Theorem \ref{mythmsigma} gives
\begin{equation*}
\lim_{\substack{(x,t)\to(x_0,0)\\\|x-x_0\|<\sqrt{\alpha t}}}W\mu(x,t)=L.
\end{equation*}
Note that
\begin{equation*}
S(x_0,\alpha)\cap\{(x,t)\in\R^{n+1}_+\mid t<\frac{1}{\alpha}\}\subset\{(x,t)\in\R^{n+1}_+\mid \|x-x_0\|<\sqrt{\alpha t},\:\:t<\frac{1}{\alpha}\}.
\end{equation*}
Using this set containment relation together with the equation above, we conclude that $W\mu$ has nontangential limit $L$ at $x_0$.
\end{proof}
We can drop the comparison condition (\ref{comp}) in Theorem \ref{mythmsigma} by imposing some growth condition on $\mu$. More precisely, we have the following.
\begin{theorem}\label{mythmgrowth}
Let $\phi:\R^n\to [0,\infty)$ be radial, radially decreasing, lower semicontinuous function with $\|\phi\|_{L^1(\R^n)}=1$. Suppose $\mu$ is a measure on $\R^n$ such that
\begin{equation}\label{growth}
|\mu|(B(0,r))=O(r^n),\:\: as\:\:r\to\infty,
\end{equation}
and that $\mu\ast\phi_{t_0}(x_1)$ is finite for some $x_1\in\R^n$ and $t_0\in(0,\infty)$. If $x_0\in\R^n$ is a $\sigma$-point of $\mu$ with $D_{\sigma}\mu(x_0)=L\in\C$, then $\phi[\mu]$ has nontangential limit $L$ at $x_0$.
\end{theorem}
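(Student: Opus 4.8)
The plan is to run the proof of Theorem \ref{mythmsigma} almost verbatim, with the growth condition (\ref{growth}) taking over the role that the comparison condition (\ref{comp}) played there. Recall that in Theorem \ref{mythmsigma} the hypothesis (\ref{comp}) entered in exactly one place, namely through Lemma \ref{reducenontan}, whose purpose was to reduce to a measure of finite total variation; that finiteness was then used only to produce the maximal bound (\ref{boundofmu}), that is, $M\mu(0)=\sup_{r>0}|\mu(B(0,r))|/m(B(0,r))<\infty$. Since we no longer assume (\ref{comp}), Lemma \ref{reducenontan} is unavailable and we cannot pass to a finite measure; instead I would establish the analogue of (\ref{boundofmu}) directly from the present hypotheses.

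First I would reduce to $x_0=0$ exactly as before, by translating $\mu$ and using translation invariance of $m$ together with (\ref{transphi}). To obtain (\ref{boundofmu}) I would split the supremum over $r$: for small $r$ the $\sigma$-point condition at $0$ gives $\mu(B(0,r))/m(B(0,r))\to L$, so this ratio stays bounded as $r\to 0$; for large $r$ the growth condition (\ref{growth}) gives $|\mu(B(0,r))|\le|\mu|(B(0,r))=O(r^n)$, whence the ratio is bounded as $r\to\infty$; and local finiteness of $|\mu|$ handles the intermediate range. Thus $M\mu(0)<\infty$. The same splitting, applied to the total variation, shows that $|\mu|(B(0,r))/r^n$ is bounded on any range of radii bounded away from $0$, a fact I will use in the domination step.

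With the finite-measure reduction gone, I would work directly on the strip $\R^n\times(0,t_0)$. Finiteness of $\mu\ast\phi_{t_0}(x_1)$ amounts to $|\mu|\ast\phi_{t_0}(x_1)<\infty$, so Remark \ref{existence} gives $|\mu|\ast\phi_t(x)<\infty$ for all $(x,t)\in\R^n\times(0,t_0)$; since $m\ast\phi_t\equiv 1$ we also have $|\mu-Lm|\ast\phi_t(x)<\infty$ there, which is precisely what justifies Fubini's theorem in the layer-cake computation. With $\phi$ lower semicontinuous, Lemma \ref{ball} applies, and I would repeat the argument leading to (\ref{final}): writing $\phi\big(\tfrac{x_k-\xi}{t_k}\big)=\int_0^{\phi(0)}\mathbf{1}_{\{\phi((x_k-\xi)/t_k)>s\}}\,ds$ and exchanging the order of integration expresses $\mu\ast\phi_{t_k}(x_k)-L$ as an integral over $s\in(0,\phi(0))$ of $(\mu-Lm)(B(x_k,t_k\theta(s)))/t_k^n$.

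The main obstacle is the dominated convergence step, and it is exactly the juncture where (\ref{growth}) must do the work formerly done by (\ref{comp}) through Lemma \ref{reducenontan}. Pointwise in $s$ the integrand tends to $0$ by the $\sigma$-point condition, since $\|x_k\|/t_k<\alpha$ forces the balls $B(x_k,t_k\theta(s))$ to shrink to $0$ within a fixed cone. For the dominating function, the factor $(\|x_k\|/t_k+\theta(s))^n\le(\theta(s)+\alpha)^n$ is harmless, so it remains to bound $|(\mu-Lm)(B(x_k,t_k\theta(s)))|/(\|x_k\|+t_k\theta(s))^n$ uniformly: on balls of small radius this is controlled by the $\sigma$-point estimate itself, while on balls of radius bounded below one passes to the total variation over the enclosing ball $B(0,\|x_k\|+t_k\theta(s))$ centred at $0$, where $|\mu|(B(0,\rho))/\rho^n$ is bounded by the remarks of the previous paragraph. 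This is the step at which (\ref{growth}) is indispensable, replacing the tail estimate that (\ref{comp}) supplied in Lemma \ref{reducenontan}. Integrability of the resulting dominating function on $(0,\phi(0))$ reduces, just as before, to integrability of $s\mapsto\theta(s)^n$, which follows from $\int_{\R^n}\phi\,dm=m(B(0,1))\int_0^{\phi(0)}\theta(s)^n\,ds=1$. Dominated convergence then yields the nontangential limit $L$.
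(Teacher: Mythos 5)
Your proof is correct and takes essentially the same route as the paper: the paper's own proof of Theorem \ref{mythmgrowth} likewise reduces everything to establishing $M\mu(0)<\infty$ directly --- using the $\sigma$-point condition (via the symmetric derivative) for small radii, the growth hypothesis (\ref{growth}) for large radii, and local finiteness of $|\mu|$ on the intermediate range --- and then declares that the rest of the argument of Theorem \ref{mythmsigma} goes through unchanged. Your more explicit treatment of the domination step (small enclosing balls handled by the $\sigma$-point estimate, larger ones by passing to the total variation over a ball centred at $0$ where $|\mu|(B(0,\rho))/\rho^n$ is bounded) is a faithful unpacking of what the paper compresses into ``the rest of the arguments remains same.''
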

\begin{proof}
Without loss of generality, we assume that $x_0=0$. We will use the same notation as in the proof of Theorem \ref{mythmsigma}. From the proof of Theorem \ref{mythmsigma}, we observe that it suffices to prove that $M\mu(0)<\infty$ and then the
the rest of the arguments remains same. As $D_{\sigma}\mu(0)=L$, it follows that $D_{sym}\mu(0)=L$ and hence there exists a positive constant $r_0$ such that
\begin{equation*}
\frac{|\mu(B(0,r))|}{m(B(0,r))}<L+1,\:\:\:\text{for all}\:\:r\leq r_0.
\end{equation*}
Using (\ref{growth}), we get two positive constants $M_0$ and $R_0$ such that
\begin{equation*}
\frac{|\mu(B(0,r))|}{m(B(0,r))}<M_0,\:\:\:\text{for all}\:\:r\geq R_0.
\end{equation*}
Finally, for all $r\in(r_0,R_0)$
\begin{equation*}
\frac{|\mu(B(0,r))|}{m(B(0,r))}\leq\frac{|\mu|(B(0,R_0))}{m(B(0,r_0))}
\end{equation*}
From the last three inequalities and the fact that $|\mu|$ is locally finite, we conclude that
\begin{equation*}
M\mu(0)=\sup_{r>0}\frac{|\mu(B(x_0,r))|}{m(B(0,r))}<\infty.
\end{equation*}
\end{proof}
\begin{remark}
We can drop the assumption that $\phi$ is lower semicontinuous from Theorem \ref{mythmsigma} and Theorem \ref{mythmgrowth} in the following two special cases.
\begin{enumerate}
\item [i)] $x_0$ is a Lebesgue point of $\mu$.
\item [ii)] $\mu$ is absolutely continuous with respect to the Lebesgue measure $m$. 
\end{enumerate}
\end{remark}
\section{$\sigma$-point and strong derivative}
In this section, we will discuss the relationship between $\sigma$-point of a measure and the notion of strong derivative of a measure introduced by Ramey-Ullrich \cite{UR}. We recall the definition of strong derivative of a measure.
\begin{defn}
Given a measure $\mu$ on $\R^n$, we say that $\mu$ has strong derivative $L\in\C$ at $x_0\in\R^n$ if
\begin{equation*}
\lim_{r\to 0}\frac{\mu(x_0+rB)}{m(rB)}=L
\end{equation*}
holds for every open ball $B\subset\R^n$. Here, $rB=\{rx\mid x\in B\}$, $r>0$. The strong derivative of $\mu$ at $x_0$, if it exists, is denoted by $D\mu(x_0)$. Note that $rB(\xi,s)=B(r\xi,rs)$.
\end{defn}
\begin{prop}\label{sigmaimstrong}
Let $\mu$ be a measure on $\R^n$. If $x_0\in\R^n$ is a $\sigma$-point of $\mu$ with $D_{\sigma}\mu(x_0)=L\in\C$, then the strong derivative of $\mu$ at $x_0$ exists and is equal to $L$.
\end{prop}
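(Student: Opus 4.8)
The plan is to fix an arbitrary open ball $B\subset\R^n$, write it as $B=B(\xi,s)$ with centre $\xi\in\R^n$ and radius $s>0$, and show directly that the difference quotient $\mu(x_0+rB)/m(rB)$ tends to $L$ as $r\to 0$. Since the strong derivative is a pointwise requirement imposed separately on each ball $B$ (with no uniformity in $B$), it suffices to handle one such ball at a time.

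First I would rewrite the dilated ball into the shape to which the $\sigma$-point estimate applies. Using the relations recorded in the definition, namely $rB(\xi,s)=B(r\xi,rs)$ together with the translation $x_0+B(r\xi,rs)=B(x_0+r\xi,rs)$, one has
\bes
\mu(x_0+rB)=\mu\big(B(x_0+r\xi,rs)\big),\qquad m(rB)=(rs)^n\,m(B(0,1)),
\ees
and therefore
\bes
\frac{\mu(x_0+rB)}{m(rB)}-L=\frac{(\mu-Lm)\big(B(x_0+r\xi,rs)\big)}{(rs)^n\,m(B(0,1))}.
\ees
The key step is then to bound the numerator by the $\sigma$-point inequality applied with the point $x=x_0+r\xi$, for which $\|x-x_0\|=r\|\xi\|$, and with the radius $rs$. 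Given $\epsilon>0$, choose $\delta>0$ as in the definition of $\sigma$-point; then for all $r$ small enough that $r\|\xi\|<\delta$ and $rs<\delta$,
\bes
\big|(\mu-Lm)\big(B(x_0+r\xi,rs)\big)\big|<\epsilon\,(r\|\xi\|+rs)^n=\epsilon\,r^n(\|\xi\|+s)^n.
\ees
Dividing by $m(rB)$ cancels the powers of $r$ and leaves
\bes
\left|\frac{\mu(x_0+rB)}{m(rB)}-L\right|<\frac{\epsilon\,(\|\xi\|+s)^n}{s^n\,m(B(0,1))}.
\ees
The right-hand side is a fixed constant, depending only on the chosen ball $B$, times $\epsilon$. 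Since $\epsilon$ is at our disposal (choosing $\delta$, and hence the threshold on $r$, for each prescribed tolerance), this forces the limit to be $0$, which gives $D\mu(x_0)=L$ for the fixed ball $B$; as $B$ was arbitrary, the strong derivative exists and equals $L$.

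I do not anticipate a genuine obstacle here; the entire content is the change of variables that expresses the dilated ball $x_0+rB$ as the single ball $B(x_0+r\xi,rs)$, whose centre drifts toward $x_0$ at rate $\|\xi\|$ measured against its shrinking radius $s$. The one point to keep in mind is that the constant in the final bound depends on $B$, which is harmless precisely because the strong derivative is a pointwise limit for each fixed ball rather than a uniform statement; in particular, no control is required for balls of very small radius $s$ or very distant centre $\xi$.
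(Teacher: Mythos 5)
Your proof is correct and takes essentially the same route as the paper's: both apply the $\sigma$-point estimate at the drifted centre $x_0+r\xi$ with radius $rs$, divide by $m(rB)=(rs)^n\,m(B(0,1))$, and note that the resulting factor $(\|\xi\|+s)^n/\bigl(s^n\,m(B(0,1))\bigr)$ is a constant depending only on the fixed ball $B$. The only cosmetic difference is that the paper records an explicit threshold $r_0=\min\{\delta/(\|\xi\|+1),\,\delta/s\}$, which your phrase ``for all $r$ small enough'' subsumes.
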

\begin{proof}
We take a ball $B=B(x,s)$ in $\R^n$ and fix $\epsilon>0$. As $x_0$ is a $\sigma$-point of $\mu$, there exists $\delta>0$ such that
\begin{equation*}
|(\mu-Lm)(x_0+rB)|=|(\mu-Lm)\left(B(x_0+rx,rs)\right)|<\epsilon(\|rx\|+rs)^n,
\end{equation*}
whenever $\|rx\|<\delta$ and $rs<\delta$. This implies that
\begin{equation*}
\left|\frac{\mu(x_0+rB)}{m(rB)}-L\right|<\epsilon\frac{(\|x\|+s)^n}{m(B(0,s))},\:\:\:\text{whenever}\:\: \|rx\|<\delta\:\:\text{and}\:\: rs<\delta.
\end{equation*}
Taking $r_0=\min\{\frac{\delta}{\|x\|+1},\:\frac{\delta}{s}\}$, it follows that the last inequality holds for all $r<r_0$. This completes the proof.
\end{proof}
\begin{remark}\label{Ramey-Ullrich}
In \cite[Theorem 2.2]{UR}, among other things, Ramey-Ullrich proved that if $\mu$ is a positive measure on $\R^n$ with well-defined Poisson integral $P\mu$ then the strong derivative of $\mu$ at $x_0\in\R^n$ is $L\in[0,\infty)$ if and only if $P\mu$ have nontangential limit $L$ at $x_0$. In view of Proposition \ref{sigmaimstrong}, we can deduce Theorem \ref{mythmsigma} for $\phi=P(.,1)$ and $\mu$ positive from the result of Ramey-Ullrich.
\end{remark}
The converse of Proposition \ref{sigmaimstrong} is true in one dimension. If $\mu$ is a locally finite signed measure on $\R$ then there is a function $f:\R\to\R$ of bounded variation such that the positive and negative parts of $f$ are right continuous and
\begin{equation*}
\mu\left((a,b]\right)=f(b)-f(a),\:\:\:a,\:b\in\R,\:a<b.
\end{equation*}
For more discussion on this see \cite[P.281-284]{SS}.
\begin{prop}\label{onedim}
Suppose that $\mu$ and $f$ as above and $x_0\in\R$.
\begin{enumerate}
\item[i)] The function $f$ is differentiable at $x_0$ if and only if $x_0$ is a $\sigma$-point of $\mu$. In this case, $f^{\prime}(x_0)=D_{\sigma}\mu(x_0)$.
\item[ii)] The function $f$ is differentiable at $x_0$ if and only if the strong derivative $\mu$ at $x_0$ exists. In this case, $f^{\prime}(x_0)=D\mu(x_0)$.
\end{enumerate}
\end{prop}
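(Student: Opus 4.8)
The plan is to prove the two implications which, combined with Proposition~\ref{sigmaimstrong}, close a cycle of equivalences among the three conditions. Since Proposition~\ref{sigmaimstrong} already shows that a $\sigma$-point of $\mu$ is a point of strong differentiability with $D\mu(x_0)=D_\sigma\mu(x_0)$, it suffices to establish: (a) if $f$ is differentiable at $x_0$ then $x_0$ is a $\sigma$-point of $\mu$ with $D_\sigma\mu(x_0)=f'(x_0)$; and (b) if $D\mu(x_0)$ exists then $f$ is differentiable at $x_0$ with $f'(x_0)=D\mu(x_0)$. Indeed, (a), the chain $\sigma$-point $\Rightarrow$ strong derivative $\Rightarrow$ differentiable (the first step being Proposition~\ref{sigmaimstrong} and the second being (b)), and (b) itself together give both equivalences and force $f'(x_0)=D_\sigma\mu(x_0)=D\mu(x_0)$. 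The computations rest on the dictionary between $\mu$ and $f$ on open intervals: for $a<b$ one has $\mu((a,b))=f(b^-)-f(a)$ and $\mu(\{c\})=f(c)-f(c^-)$, and in dimension one the ball $B(x,r)$ is the interval $(x-r,x+r)$.

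For (a), put $L=f'(x_0)$. Differentiability yields, for each $\epsilon>0$, an $\eta>0$ with $|f(y)-f(x_0)-L(y-x_0)|\le\epsilon|y-x_0|$ for $|y-x_0|<\eta$; as the left-hand side is continuous in $y$ and $f$ possesses one-sided limits, the same bound holds with $f(y)$ replaced by $f(y^-)$. Writing $(\mu-Lm)(B(x,r))=[f((x+r)^-)-f(x_0)-L((x+r)-x_0)]-[f(x-r)-f(x_0)-L((x-r)-x_0)]$ and estimating each bracket (legitimate once $|x-x_0|+r<\eta$) gives $|(\mu-Lm)(B(x,r))|\le\epsilon(|(x+r)-x_0|+|(x-r)-x_0|)\le 2\epsilon(|x-x_0|+r)$. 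After rescaling $\epsilon$ and taking $\delta=\eta/2$, this is exactly the $\sigma$-point condition with $D_\sigma\mu(x_0)=L$.

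For (b), set $L=D\mu(x_0)$. I would first record continuity of $f$ at $x_0$: applied to the ball $(-1,1)$, the hypothesis gives $\mu((x_0-r,x_0+r))=2Lr+o(r)\to 0$, while continuity of $\mu$ from above (with $|\mu|$ finite near $x_0$) gives $\mu((x_0-r,x_0+r))\to\mu(\{x_0\})$; hence $\mu(\{x_0\})=0$, i.e.\ $f(x_0^-)=f(x_0)$, and right-continuity of $f$ yields continuity at $x_0$. For the right derivative, apply the hypothesis to the fixed balls $(0,2)$ and $(1,2)$: since $x_0+r(0,2)=(x_0,x_0+2r)$ and $x_0+r(1,2)=(x_0+r,x_0+2r)$, the dictionary gives $\frac{f((x_0+2r)^-)-f(x_0)}{r}\to 2L$ and $\frac{f((x_0+2r)^-)-f(x_0+r)}{r}\to L$; subtracting cancels the common term $f((x_0+2r)^-)$ and produces $\frac{f(x_0+r)-f(x_0)}{r}\to L$. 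For the left derivative, the ball $(-1,0)$, for which $x_0+r(-1,0)=(x_0-r,x_0)$, gives $\frac{f(x_0^-)-f(x_0-r)}{r}\to L$, and $f(x_0^-)=f(x_0)$ turns this into $\frac{f(x_0-r)-f(x_0)}{-r}\to L$. The two one-sided derivatives agree with $L$, so $f'(x_0)=L$.

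The main obstacle is the jump asymmetry underlying (b): the dictionary stores the genuine value $f(a)$ at a left endpoint but only the left limit $f(b^-)$ at a right endpoint, so the single ball $(0,1)$ controls $f((x_0+r)^-)-f(x_0)$ rather than the true difference quotient $\frac{f(x_0+r)-f(x_0)}{r}$. Eliminating the spurious jump at the moving point $x_0+r$ is exactly what the two-ball subtraction achieves, and isolating continuity at $x_0$ beforehand is what lets the left derivative emerge with the actual value $f(x_0)$. By comparison (a) is routine, the only delicacy being the passage of the differentiability estimate to one-sided limits.
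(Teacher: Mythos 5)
Your proof is correct, but it takes a genuinely different route from the paper's for everything except the first implication. For differentiability $\Rightarrow$ $\sigma$-point you do essentially what the paper does — split $f(x+r)-f(x-r)-2rL$ into two difference-quotient brackets anchored at $x_0$ — though you are more careful: you use the exact identity $\mu((a,b))=f(b^-)-f(a)$ and push the differentiability estimate to left limits, whereas the paper writes $\mu((x-r,x+r))=f(x+r)-f(x-r)$, which is accurate only modulo a possible jump at the right endpoint (repairable by a routine limiting argument, but glossed over there). For the converses the approaches diverge: the paper proves $\sigma$-point $\Rightarrow$ differentiable directly, by choosing $x=x_0+r$ and $x=x_0-r$ in the $\sigma$-condition so that the ball becomes $(x_0,x_0+2r)$ or $(x_0-2r,x_0)$ and the one-sided derivatives drop out, and then disposes of part ii) with ``argue in a similar fashion'' plus a citation to \cite{Sar2} for the monotone case. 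You instead prove the stronger implication — existence of $D\mu(x_0)$, a weaker hypothesis than the $\sigma$-point condition by Proposition \ref{sigmaimstrong}, already implies differentiability — and close the cycle differentiable $\Rightarrow$ $\sigma$-point $\Rightarrow$ strong derivative $\Rightarrow$ differentiable. This buys a complete, self-contained proof of ii), which the paper only sketches, at the modest cost of invoking Proposition \ref{sigmaimstrong} (legitimate, as it precedes the statement). Your two new ingredients — first establishing $\mu(\{x_0\})=0$, hence $f(x_0^-)=f(x_0)$, via continuity from above, and then cancelling the left limit $f((x_0+2r)^-)$ at the moving right endpoint by subtracting the data from the two balls $(0,2)$ and $(1,2)$ — are exactly what is needed to handle the jump asymmetry that the paper's shorthand hides, and your formulation also sidesteps the paper's harmless division by $x-x_0\pm r$, which can vanish. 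One cosmetic slip: in (a), the quantity $|f(y)-f(x_0)-L(y-x_0)|$ is not continuous in $y$ (since $f$ may jump); what you actually use, and what is valid, is that the inequality survives letting $y_k\uparrow y$, because the right-hand side $\epsilon|y-x_0|$ is continuous and $f$ has one-sided limits.
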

\begin{proof}
We first prove $i)$. Suppose $f$ is differentiable at $x_0$ and $f^{\prime}(x_0)=L\in\R$. Fix $\epsilon>0$ and choose $\delta>0$ such that
\begin{equation}\label{fdifferentiable}
\left|\frac{f(x_0+h)-f(x_0)}{h}-L\right|<\epsilon,\:\:\:\text{whenever}\:\:|h|<\delta.
\end{equation}
For $x\in\R$, $r>0$ with $|(x-x_0)+r|<\delta$ and $|(x-x_0)-r|<\delta$, we have
\begin{eqnarray*}
&&\left|\left(\mu-Lm\right)((x-r,x+r))\right|\\&=&|f(x+r)-f(x-r)-2rL|\\
&=&\left|\frac{f(x_0+x-x_0+r)-f(x_0)}{x-x_0+r}\times(x-x_0+r)-(x-x_0+r)L+(x-x_0-r)L\right.\\
&&\:\:\:\:\:\left.-\frac{f(x_0+x-x_0-r)-f(x_0)}{x-x_0-r}\times(x-x_0-r)\right|\\
&\leq&|x-x_0+r|\left|\frac{f(x_0+x-x_0+r)-f(x_0)}{x-x_0+r}-L\right|\\
&&\:\:+|x-x_0-r|\left|\frac{f(x_0+x-x_0-r)-f(x_0)}{x-x_0-r}-L\right|\\
&<&|x-x_0+r|\epsilon+|x-x_0-r|\epsilon\:\:\:\:(\text{by}\:(\ref{fdifferentiable})).
\end{eqnarray*}
This implies that
\begin{equation*}
|(\mu-Lm)(B(x,r))|<2\epsilon(|x-x_0|+r),\:\:\:\text{whenever}\:\:|x-x_0|<\delta/2,\: r<\delta/2.
\end{equation*}
Thus, $x_0$ is a $\sigma$-point of $\mu$ with $D_{\sigma}\mu(x_0)=L$.

Conversely, we assume that $x_0$ is a $\sigma$-point of $\mu$ with $D_{\sigma}\mu(x_0)=L\in\R$ and fix $\epsilon>0$. Then there exists $\delta>0$ such that
\begin{equation}\label{sigmadefn}
|(\mu-Lm)((x-r,x+r))|<\epsilon(|x-x_0|+r),\:\:\:\text{whenever}\:\:|x-x_0|<\delta,\: r<\delta.
\end{equation}
Taking $x=x_0+r$ with $r>0$ in (\ref{sigmadefn}), we obtain
\begin{eqnarray*}
|\mu((x_0,x_0+2r))-2rL|&=& |f(x_0+2r)-f(x_0)-2rL|\\
&=& 2r\left|\frac{f(x_0+2r)-f(x_0)}{2r}-L\right|<\epsilon,
\end{eqnarray*}
whenever $r<\delta$. This shows that $f^{\prime}(x_0+)=L$. Similarly, by taking $x=x_0-r$ with $r>0$ in (\ref{sigmadefn}), we get that $f^{\prime}(x_0-)=L$.

The statement $ii)$ can be proved by arguing in a similar fashion. We refer the reader to \cite[Remark 2.6 (2)]{Sar2} where it was proved under the assumption that $f$ is monotonically increasing.
\end{proof}
Considering real and imaginary parts of a measure, if necessary, we obtain the following corollary.
\begin{corollary}\label{sigmaequalstrong}
Suppose $\mu$ is a measure on $\R$ and $x_0\in\R$. Then $x_0$ is a $\sigma$-point of $\mu$ if and only if $\mu$ has strong derivative at $x_0$. Moreover, $D_{\sigma}\mu(x_0)=D\mu(x_0)$.
\end{corollary}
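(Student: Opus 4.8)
The plan is to reduce everything to the signed (real) case already settled in Proposition \ref{onedim} by splitting $\mu$ into its real and imaginary parts. Write $\mu=\mu_1+i\mu_2$, where $\mu_1=\mathrm{Re}\,\mu$ and $\mu_2=\mathrm{Im}\,\mu$ are locally finite signed measures on $\R$, and accordingly write a prospective limit as $L=a+ib$ with $a,b\in\R$. Since the Lebesgue measure $m$ is real, for every ball $B(x,r)$ we have $(\mu-Lm)(B(x,r))=(\mu_1-am)(B(x,r))+i(\mu_2-bm)(B(x,r))$, and here the two summands are exactly the real and imaginary parts of the left-hand side. The whole argument is then bookkeeping with the elementary inequalities $\max\{|\mathrm{Re}\,z|,|\mathrm{Im}\,z|\}\le|z|\le|\mathrm{Re}\,z|+|\mathrm{Im}\,z|$, together with Proposition \ref{onedim}; I do not expect a genuine analytic obstacle, since all the real content is already contained in that proposition. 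The only point requiring slight care is matching tolerances under the factor-of-two inequalities above and confirming that the limit values coincide across all three notions.

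First I would establish that $x_0$ is a $\sigma$-point of $\mu$ with $D_{\sigma}\mu(x_0)=L=a+ib$ if and only if $x_0$ is a $\sigma$-point of $\mu_1$ with $D_{\sigma}\mu_1(x_0)=a$ and a $\sigma$-point of $\mu_2$ with $D_{\sigma}\mu_2(x_0)=b$. In one direction, if the defining inequality $|(\mu-Lm)(B(x,r))|<\epsilon(|x-x_0|+r)$ holds for $|x-x_0|<\delta$, $r<\delta$, then applying $|\mathrm{Re}\,z|\le|z|$ and $|\mathrm{Im}\,z|\le|z|$ to the displayed decomposition immediately gives the same estimate, with the same $\epsilon$ and $\delta$, for $\mu_1-am$ and for $\mu_2-bm$. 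Conversely, if each of $\mu_1,\mu_2$ satisfies its $\sigma$-point estimate with tolerance $\epsilon/2$ on a common $\delta$, then $|z|\le|\mathrm{Re}\,z|+|\mathrm{Im}\,z|$ yields the $\sigma$-point estimate for $\mu$ with tolerance $\epsilon$.

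An identical splitting handles the strong derivative: for any fixed open ball $B$, the quotient $\mu(x_0+rB)/m(rB)$ equals $\mu_1(x_0+rB)/m(rB)+i\,\mu_2(x_0+rB)/m(rB)$, with real and imaginary parts the two real quotients, so it converges as $r\to0$ to $L=a+ib$ if and only if the two real quotients converge to $a$ and $b$ respectively. Hence $\mu$ has strong derivative $L$ at $x_0$ exactly when $\mu_1$ and $\mu_2$ have strong derivatives $a$ and $b$ there. To finish, I would apply Proposition \ref{onedim} to each of the signed measures $\mu_1$ and $\mu_2$: parts (i) and (ii) show that, for each index, being a $\sigma$-point and having a strong derivative are both equivalent to differentiability of the associated bounded-variation function at $x_0$, with all three values coinciding. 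Chaining these equivalences with the two component reductions above gives that $x_0$ is a $\sigma$-point of $\mu$ if and only if $\mu$ has a strong derivative at $x_0$, and the identities $D_{\sigma}\mu_1(x_0)=D\mu_1(x_0)=a$ and $D_{\sigma}\mu_2(x_0)=D\mu_2(x_0)=b$ assemble into $D_{\sigma}\mu(x_0)=a+ib=D\mu(x_0)$.
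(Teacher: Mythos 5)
Your proposal is correct and matches the paper's intended argument exactly: the paper derives this corollary from Proposition \ref{onedim} with precisely the remark ``considering real and imaginary parts of a measure, if necessary,'' and your write-up simply fills in the routine bookkeeping (the estimates via $\max\{|\mathrm{Re}\,z|,|\mathrm{Im}\,z|\}\le|z|\le|\mathrm{Re}\,z|+|\mathrm{Im}\,z|$ and the componentwise reductions) that the paper leaves implicit.
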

\begin{remark}
\begin{enumerate}
\item [i)]It is not known to us whether for a measure $\mu$ on $\R^n$, the $\sigma$-set of $\mu$ coincides with the set of points at which the strong derivative of $\mu$ exists, if $n>1$. It would be surprising if it is true in higher dimensions. A heuristic reasoning behind this is the following observation. Suppose $\mu$ is a measure on $\R^n$. If $0$ is a $\sigma$-point of $\mu$ then
\begin{equation*}
(\mu-D_{\sigma}(0)m)(B(x,r))\to 0,\:\:\:\:\text{as}\;\:(x,r)\to (0,0). 
\end{equation*}
On the other hand, existence of strong derivative at $0$ only ensures 
\begin{equation*}
(\mu-D\mu(0)m)(B(x,r))\to 0,\:\:\:\:\text{as}\:\: (x,r)\to (0,0), 
\end{equation*}
along the rays of the form $\{(rx_0,rt_0)\mid r>0\}$, where $(x_0,t_0)\in\R^{n+1}_+$.
\item [ii)] Suppose $\mu$ and $\phi$ as in Theorem \ref{mythmsigma} and $n=1$. If $D\mu(x_0)=L$ then it follows from Proposition \ref{onedim} and Theorem \ref{mythmsigma} that $\phi[\mu]$ converges nontangentially to $L$. It is not known whether the same is true for dimension $n>1$. However, the following theorem shows that a weaker version of convergence for $\phi[\mu]$ holds at the points where the strong derivative $D\mu$ exists.
\end{enumerate}
\end{remark}
\begin{theorem}
Let $\phi$ and $\mu$ be as in Theorem \ref{mythmsigma}. Suppose $\mu$ has strong derivative $L\in\C$ at $x_0\in\R^n$. Then $\phi[\mu](x,t)$ has limit $L$ as $(x,t)\to (x_0,0)$ along each ray through $(x_0,0)$ in $\R^{n+1}_+$. In other words,
\begin{equation*}
\lim_{r\to 0}\phi[\mu](x_0+r\xi,r\eta)=L,\:\:\:\text{for each fixed}\:\:(\xi,\eta)\in\R^{n+1}_+.
\end{equation*}
\end{theorem}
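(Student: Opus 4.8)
The plan is to mimic the proof of Theorem \ref{mythmsigma}, but to exploit the fact that along a fixed ray the ratio $\|x-x_0\|/t$ is constant, so that the weaker ray-convergence follows from the strong derivative hypothesis alone (without needing the full $\sigma$-point control). Without loss of generality I take $x_0=0$, using the same translation argument as in Theorem \ref{mythmsigma}. Fix a ray, i.e. fix $(\xi,\eta)\in\R^{n+1}_+$ and set $x=r\xi$, $t=r\eta$ with $r\to 0$; note that this ray lies in $S(0,\alpha)$ for $\alpha=\|\xi\|/\eta$, so all the reductions of Theorem \ref{mythmsigma} apply. Using Lemma \ref{reducenontan} I reduce to the case $|\mu|(\R^n)<\infty$, after checking that strong differentiability at $0$ is preserved under restriction to $\overline{B(0,t_0)}$ (this follows from the same containment $B(x,s)\subset B(0,t_0)$ for small balls used in the $\sigma$-point case).

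\textbf{Main computation.} I reproduce the layer-cake identity from the proof of Theorem \ref{mythmsigma}: writing $t=r\eta$, $x=r\xi$,
\begin{equation*}
\mu\ast\phi_t(x)-L=\int_{0}^{\phi(0)}\frac{(\mu-Lm)\big(B(x,t\,\theta(s))\big)}{m\big(B(x,t\,\theta(s))\big)}\,\frac{m\big(B(0,\theta(s))\big)}{\ \ }\,ds,
\end{equation*}
which after substituting $x=r\xi$, $t=r\eta$ becomes an integral whose integrand is
\begin{equation*}
\frac{(\mu-Lm)\big(r\,B(\xi,\eta\,\theta(s))\big)}{m\big(r\,B(\xi,\eta\,\theta(s))\big)}\;m\big(B(\xi,\eta\,\theta(s))\big).
\end{equation*}
Here $B(\xi,\eta\,\theta(s))$ is a \emph{fixed} ball for each fixed $s$, so the existence of the strong derivative $D\mu(0)=L$ gives directly that the first factor tends to $L-L=0$ as $r\to 0$, for each fixed $s\in(0,\phi(0))$. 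Thus the integrand converges pointwise in $s$ to zero; this is precisely the point where the ray restriction replaces the stronger $(x,r)\to(0,0)$ limit needed for full nontangential convergence.

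\textbf{Domination and conclusion.} To pass the limit inside the integral I need a dominating function independent of $r$. Using the bound $M\mu(0)<\infty$ (established exactly as in Theorem \ref{mythmsigma}, since $D\mu(0)=L$ forces $D_{sym}\mu(0)=L$ and hence control of $|\mu(B(0,\rho))|/m(B(0,\rho))$ for small $\rho$, combined with finiteness of $|\mu|(\R^n)$), I bound $|(\mu-Lm)(B(r\xi,r\eta\theta(s)))|$ by a constant multiple of $m\big(B(r\xi,r\eta\theta(s))\big)=r^n m\big(B(\xi,\eta\theta(s))\big)$, so the integrand is dominated by $C\,(\,\|\xi\|+\eta\theta(s)\,)^n$ for a constant $C$ depending on $M\mu(0)$, $L$, and the fixed ray. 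The integrability of $s\mapsto\theta(s)^n$ on $(0,\phi(0))$ — equivalently $\int_{\R^n}\phi=m(B(0,1))\int_0^{\phi(0)}\theta(s)^n\,ds<\infty$, as computed in Theorem \ref{mythmsigma} — then yields an integrable dominating function, and the dominated convergence theorem gives $\mu\ast\phi_{r\eta}(r\xi)\to L$ as $r\to 0$.

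\textbf{Main obstacle.} The routine parts (reduction, layer-cake, domination) transcribe directly from Theorem \ref{mythmsigma}. The one genuine subtlety is that the strong derivative hypothesis only controls ratios over balls of the form $r\,B$ with $B$ fixed, so I must be careful that the argument $B(r\xi,r\eta\theta(s))=r\,B(\xi,\eta\theta(s))$ is used for each fixed $s$ separately — the ball $B(\xi,\eta\theta(s))$ genuinely depends on $s$, so I am invoking the strong-derivative limit for a one-parameter family of balls, which is fine since the hypothesis holds for \emph{every} open ball. This is exactly why only ray convergence, and not full nontangential convergence, can be extracted, matching the heuristic in the preceding remark.
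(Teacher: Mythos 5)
Your proof is correct and follows essentially the same route as the paper's: the same reduction via Lemma \ref{reducenontan} (after checking that the strong derivative survives restriction to $\overline{B(0,t_0)}$), the same substitution $x=r\xi$, $t=r\eta$ into the layer-cake identity (\ref{final}), the same observation that $B(r\xi,r\eta\theta(s))=rB(\xi,\eta\theta(s))$ lets the strong-derivative hypothesis give pointwise convergence for each fixed $s$, and the same dominated-convergence conclusion using $M\mu(0)<\infty$ and the integrability of $s\mapsto\theta(s)^n$. Only cosmetic slips: your substituted integrand carries a harmless extra constant factor of $\eta^{n}$ (the first display is garbled but the intended identity is clear), and the ray lies in $S(0,\alpha)$ only for $\alpha>\|\xi\|/\eta$ strictly --- neither affects the argument.
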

\begin{proof}
Without loss of generality, we can assume $x_0=0$. Let $\tilde{\mu}$ be the restriction of $\mu$ on the ball $\overline{B(0,t_0)}$. If $B(y,\tau)$ is any given ball, then for all $0<r<t_0(\tau+\|y\|)^{-1}$, it follows that $rB(y,\tau)$ is contained in $B(0,t_0)$. This in turn implies that $D\mu(0)$ and $D\tilde{\mu}(0)$ are equal. Thus, in view of Lemma \ref{reducenontan}, without loss of generality, we can assume that $|\mu|(\R^n)$ is finite. We will use the same notation as in the proof of Theorem \ref{mythmsigma}. Since $D\mu(0)$ is equal to $L$, it follows that $D_{sym}\mu(0)$ is also equal to $L$ and hence $M\mu(0)$ is finite (see the argument preceding (\ref{boundofmu})). We take $(\xi,\eta)\in\R^{n+1}_+$ and a sequence $\{r_k\}$ of positive numbers converging to zero. Substituting $x_k=r_k\xi$, $t_k=r_k\eta$ in equation (\ref{final}), we obtain
\begin{equation}\label{final1}
\phi[\mu](r_k\xi,r_k\eta)-L=\int_{0}^{\phi(0)}\frac{(\mu-Lm)\left(B(r_k\xi,r_k\eta\theta(s))\right)}{\left(r_k\eta\theta(s)\right)^n}\theta(s)^n\:ds.
\end{equation}
As $D\mu(0)=L$, using the definition of strong derivative, we observe that for each fixed $s\in (0,\phi(0))$ 
\begin{equation}\label{limittoapplydct}
\lim_{k\to\infty}\frac{(\mu-Lm)(B(r_k\xi,r_k\eta\theta(s)))}{(r_k\eta\theta(s))^n}=\lim_{k\to\infty}\left(\frac{\mu(r_kB(\xi,\eta\theta(s)))}{m(r_kB(\xi,\eta\theta(s)))}-L\right)c_n^{\prime}=0,
\end{equation}
where $c_n^{\prime}=m(B(0,1))$. The integrand on the right hand side of (\ref{final1}) is bounded by the function
\begin{equation*}
s\mapsto m(B(0,1))(M\mu(0)+L)\theta(s)^n,\:\:\:\: s\in(0,\phi(0)).
\end{equation*}
We have seen in the proof of Theorem \ref{mythmsigma} that this function is integrable in $(0,\phi(0))$. In view of (\ref{limittoapplydct}), we can now apply dominated convergence theorem on the right-hand side of (\ref{final1}) to complete the proof.
\end{proof}
We show by an example that the existence of limit of $\phi[\mu]$ along every ray through $(x_0,0)$ may not imply the existence of the strong derivative at $x_0$.
\begin{example}
Consider the measure $d\mu=\chi_{[0,1]}dm$ on $\R$. Then $D_{sym}\mu(0)$ is $1/2$ but the strong derivative $D\mu$ does not exist at the origin (see \cite[Remark 2.5]{Sar2}). Taking $\phi=P(.,1)$, we see that
\begin{equation*}
\phi[\mu](x,t)=\frac{1}{\pi}\int_{0}^{1}\frac{t}{t^2+(x-\xi)^2}\:dm(\xi)=\frac{1}{\pi}\left(\arctan\frac{1-x}{t}+\arctan\frac{x}{t}\right),\:\:\:(x,t)\in\R^{n+1}_+.
\end{equation*}
Therefore, for each fixed $(\xi_0,t_0)\in\R^{n+1}_+$ we have
\begin{equation*}
\lim_{r\to 0}\phi[\mu](r\xi_0,rt_0)=\lim_{r\to 0}\frac{1}{\pi}\left(\arctan\frac{1-r\xi_0}{rt_0}+\arctan\frac{r\xi_0}{rt_0}\right)=\frac{1}{\pi}\left(\frac{\pi}{2}+\arctan\frac{\xi_0}{t_0}\right).
\end{equation*}
This shows that $\phi[\mu]$ has limit along every ray through the origin but the limit depends on the ray.
\end{example}
\section*{acknowledgements}
The author would like to thank Swagato K. Ray for many
useful discussions during the course of this work. The author is supported by a research fellowship from Indian Statistical Institute.

\end{document}